\documentclass[final,3p]{elsarticle}
 \usepackage{graphics}
 \usepackage{graphicx}
 \usepackage{epsfig}
\usepackage{amssymb}
 \usepackage{amsthm}
 \usepackage{lineno}
 \usepackage{amsmath}
   \numberwithin{equation}{section}
\usepackage{mathrsfs}

\NeedsTeXFormat{LaTeX2e}
\ProvidesPackage{natbib}
\newtheorem{thm}{Theorem}[section]

\newtheorem{lem}[thm]{Lemma}

 \setcounter{section}{0}
\biboptions{sort&compress,square}
\journal{Bulletin Des Sciences Math\'{e}matiques}
\begin{document}
\begin{frontmatter}
\author[rvt1]{Sining Wei}
\ead{weisn835@nenu.edu.cn}
\author[rvt2]{Yong Wang\corref{cor2}}
\ead{wangy581@nenu.edu.cn}
\cortext[cor2]{Corresponding author.}

\address[rvt1]{School of Data Science and Artificial Intelligence, Dongbei University of Finance and Economics,\\
Dalian, 116025, P.R.China}
\address[rvt2]{School of Mathematics and Statistics, Northeast Normal University,
Changchun, 130024, China}

\title{Transformation operators and the Kastler-Kalau-Walze type theorems on 4-dimensional manifolds}
\begin{abstract}
In this paper, we compute the
lower-dimensional volume Vol(1,1) about transformation operators for 4-dimensional spin manifolds with boundary and we also get
the Kastler-Kalau-Walze type theorem about transformation operators on 4-dimensional compact manifolds with boundary.
\end{abstract}

\begin{keyword} Transformation operators; the Kastler-Kalau-Walze type theorems; noncommutative residue.
\end{keyword}
\end{frontmatter}
\section{Introduction}
\label{1}
Numerous geometers have delved into the realm of noncommutative residues, recognizing their profound significance in the realm of noncommutative geometry. In \cite{Gu,Wo}, the authors highlighted the pivotal role of noncommutative residues. Initially discovered by Adler \cite{MA} in the context of one-dimensional manifolds, its significance became more apparent through Wodzicki's work \cite{Wo} on arbitrary closed compact $n$-dimensional manifolds, employing the theory of zeta functions of elliptic pseudo-differential operators. Additionally, Connes, in his work, utilized noncommutative residues to derive an analogy to the conformal 4-dimensional Polyakov action, as elucidated in \cite{Co1}. Furthermore, Connes demonstrated the equivalence between the noncommutative residue on a compact manifold $M$ and Dixmier's trace on pseudo-differential operators of order $-{\rm {dim}}M$, as expounded in \cite{Co2}.
Connes also put forth the notion that the noncommutative residue arising from the square of the inverse of the Dirac operator was proportioned to the Einstein-Hilbert action.
Kastler \cite{Ka} provided a rigorous proof of this theorem, employing a meticulous approach. Kalau and Walze independently corroborated this theorem within the framework of normal coordinate systems in their work \cite{KW}.
Ackermann demonstrated that the Wodzicki residue of the square of the inverse of the Dirac operator, denoted as ${\rm Wres}(D^{-2})$, essentially corresponds to the second coefficient in the heat kernel expansion of $D^{2}$ in \cite{Ac}.

In \cite{RP}, Ponge introduced a method for defining lower-dimensional volumes of Riemannian manifolds utilizing the Wodzicki residue. Fedosov et al. established a noncommutative residue within Boutet de Monvel's algebra and demonstrated its uniqueness as a continuous trace in \cite{FGLS}. Schrohe, in \cite{S}, elucidated the relationship between the Dixmier trace and the noncommutative residue specifically for manifolds with boundary. Wang extended the Kastler-Kalau-Walze theorem to encompass 3 and 4-dimensional spin manifolds with boundary, presenting a theorem of similar nature in \cite{Wa3}. Additionally, in a series of works \cite{Wa4,WJ2,WJ3,WJ4}, Y. Wang and collaborators computed lower-dimensional volumes for spin manifolds of dimensions 5, 6 and 7 with boundary while also deriving analogous Kastler-Kalau-Walze theorems. 
In essence, Wang presents a methodology for exploring the Kastler-Kalau-Walze type theorem in the context of manifolds with boundary.

In \cite{WJ6}, J. Wang and Y. Wang established two distinct Kastler-Kalau-Walze type theorems.
These theorems pertain to conformal perturbations
of twisted Dirac operators and conformal perturbations
of signature operators, facilitated by a vector bundle with a non-unitary connection on four-dimensional manifolds with (respectively without) boundary. In \cite{UE}, Ertem introduced the transformation operator which transformed the twistor spinors to harmonic spinors and this operator is important.

{\bf The primary objective of this study} is to investigate the lower-dimensional volume $\text{Vol}(1,1)$ associated with transformation operators on 4-dimensional spin manifolds with boundary, with a particular focus on establishing Kastler-Kalau-Walze type theorems pertaining to these operators. It is noteworthy that the leading symbol of these transformation operators does not conform to the form $\sqrt{-1}c(\xi)$, which underscores the need for our investigation into the residues of such operators. By exploring the relationship between the transformation operators and the geometric and topological structure of the manifolds, we aim to derive a Kastler-Kalau-Walze-type theorem specifically for 4-dimensional compact manifolds with boundary. Through this research, we hope to contribute to the field of spin manifolds, transformation operators, and volume calculations, extending and deepening existing theories, while providing new insights and directions for future studies.\\

The structure of this paper unfolds as follows: Section 2 revisits fundamental concepts and formulas concerning lower-dimensional volumes of spin manifolds  with boundary. In Section 3, we give the transformation operators and its Lichnerowicz formula. In Sections 4, we present a Kastler-Kalau-Walze type theorems tailored to the realm of transformation operators on 4-dimensional manifolds with boundary. Notably, the pivotal findings of this paper are encapsulated within Theorem \ref{th:311},
Theorem \ref{th:312} and Theorem \ref{999}.

\section{Lower-Dimensional Volumes of Spin Manifolds  with  boundary}
In this section, we shall recall
some basic facts and formulas about Boutet de Monvel's calculus and lower-dimensional volumes of spin manifolds  with boundary.

Let $$ F:L^2({\bf R}_t)\rightarrow L^2({\bf R}_v);~F(u)(v)=\int_{R} e^{-ivt}u(t)dt$$ denote the Fourier transformation and
$\varphi(\overline{{\bf R}^+}) =r^+\varphi({\bf R})$ (similarly define $\varphi(\overline{{\bf R}^-}$)), where $\varphi({\bf R})$
denotes the Schwartz space and
  \begin{equation}
r^{+}:C^\infty ({\bf R})\rightarrow C^\infty (\overline{{\bf R}^+});~ f\rightarrow f|\overline{{\bf R}^+};~
 \overline{{\bf R}^+}=\{x\geq0;x\in {\bf R}\}.
\end{equation}
We define $H^+=F(\varphi(\overline{{\bf R}^+}));~ H^-_0=F(\varphi(\overline{{\bf R}^-}))$ which are orthogonal to each other. We have the following
 property: $h\in H^+~$(respectively~$H^-_0)$ if and only if $h\in C^\infty({\bf R})$ which has an analytic extension to the lower (respectively~upper) complex
half-plane $\{{\rm Im}\xi<0\}~$(respectively~$\{{\rm Im}\xi>0\})$ such that for all nonnegative integer $l$,
 \begin{equation}
\frac{d^{l}h}{d\xi^l}(\xi)\sim\sum^{\infty}_{k=1}\frac{d^l}{d\xi^l}(\frac{c_k}{\xi^k})
\end{equation}
as $|\xi|\rightarrow +\infty,{\rm Im}\xi\leq0~$(respectively~${\rm Im}\xi\geq0)$.

 Let $H'$ be the space of all polynomials and $H^-=H^-_0\bigoplus H';~H=H^+\bigoplus H^-.$ Denote by $\pi^+~$(respectively~$\pi^-)$ the
 projection on $H^+~$(respectively~$H^-)$. For calculations, we take $H=\widetilde H=\{$rational functions having no poles on the real axis$\}$ ($\tilde{H}$
 is a dense set in the topology of $H$). Then on $\tilde{H}$,
 \begin{equation}
\pi^+h(\xi_0)=\frac{1}{2\pi i}\lim_{u\rightarrow 0^{-}}\int_{\Gamma^+}\frac{h(\xi)}{\xi_0+iu-\xi}d\xi,
\end{equation}
where $\Gamma^+$ is a Jordan close curve included ${\rm Im}\xi>0$ surrounding all the singularities of $h$ in the upper half-plane and
$\xi_0\in {\bf R}$. Similarly, define $\pi^{'}$ on $\tilde{H}$,
 \begin{equation}
\pi'h=\frac{1}{2\pi}\int_{\Gamma^+}h(\xi)d\xi.
\end{equation}
So, $\pi'(H^-)=0$. For $h\in H\bigcap L^1(\mathbb{R})$, $\pi'h=\frac{1}{2\pi}\int_{\bf R}h(v)dv$ and for $h\in H^+\bigcap L^1(\mathbb{R})$, $\pi'h=0$.

Denote by $\mathcal{B}$ Boutet de Monvel's algebra.
For a detailed introduction to Boutet de Monvel's algebra see Boutet de
Monvel \cite{LB}, Grubb \cite{GG}, Rempel-Schulze \cite{BS} or Schrohe-Schulze \cite{EB}.
In the following, we will give a review of some basic fact we need.

An operator of order $m\in {\bf Z}$ and type $d$ is a matrix
$$A=\left(\begin{array}{lcr}
  \pi^+P+G  & K  \\
   T  &  S
\end{array}\right):
\begin{array}{cc}
\   C^{\infty}(X,E_1)\\
 \   \bigoplus\\
 \   C^{\infty}(\partial{X},F_1)
\end{array}
\longrightarrow
\begin{array}{cc}
\   C^{\infty}(X,E_2)\\
\   \bigoplus\\
 \   C^{\infty}(\partial{X},F_2)
\end{array},
$$
where $X$ is a manifold with boundary $\partial X$ and
$E_1,E_2~$(respectively~$F_1,F_2)$ are vector bundles over $X~$(respectively~$\partial X
)$.\\~Here,~$P:C^{\infty}_0(\Omega,\overline {E_1})\rightarrow
C^{\infty}(\Omega,\overline {E_2})$ is a classical
pseudodifferential operator of order $m$ on $\Omega$, where
$\Omega$ is an open neighborhood of $X$ and
$\overline{E_i}|X=E_i~(i=1,2)$. Then $P$ has an extension:
$~{\cal{E'}}(\Omega,\overline {E_1})\rightarrow
{\cal{D'}}(\Omega,\overline {E_2})$, where
${\cal{E'}}(\Omega,\overline {E_1})$ and ${\cal{D'}}(\Omega,\overline
{E_2})$ are the dual space of $C^{\infty}(\Omega,\overline
{E_1})$ and $C^{\infty}_0(\Omega,\overline {E_2})$. Let
$e^+:C^{\infty}(X,{E_1})\rightarrow{\cal{E'}}(\Omega,\overline
{E_1})$ denote extension by zero from $X$ to $\Omega$ and
$r^+:{\cal{D'}}(\Omega,\overline{E_2})\rightarrow
{\cal{D'}}(\Omega, {E_2})$ denote the restriction from $\Omega$ to
$X$, then define
$$\pi^+P=r^+Pe^+:C^{\infty}(X,{E_1})\rightarrow {\cal{D'}}(\Omega,
{E_2}).$$
In addition, $P$ is supposed to have the
transmission property; this means that, for all $j,k,\alpha$, the
homogeneous component $p_j$ of order $j$ in the asymptotic
expansion of the
symbol $p$ of $P$ in local coordinates near the boundary satisfies:
$$\partial^k_{x_n}\partial^\alpha_{\xi'}p_j(x',0,0,+1)=
(-1)^{j-|\alpha|}\partial^k_{x_n}\partial^\alpha_{\xi'}p_j(x',0,0,-1),$$
then $\pi^+P$ maps $C^{\infty}(X,{E_1})$ into $C^{\infty}(X,{E_2})$
by Section 2.1 of \cite{Wa5}.

Let $G$, $T$ be respectively the singular Green operator
and the trace operator of order $m$ and type $d$. $K$ is a
potential operator and $S$ is a classical pseudodifferential
operator of order $m$ along the boundary (for detailed definition,
see [11]). Denote by $B^{m,d}$ the collection of all operators of
order $m$
and type $d$,  and $\mathcal{B}$ is the union over all $m$ and $d$.\\
\indent Recall $B^{m,d}$ is a Fr\'{e}chet space. The composition
of the above operator matrices yields a continuous map:
$B^{m,d}\times B^{m',d'}\rightarrow B^{m+m',{\rm max}\{
m'+d,d'\}}.$ Write $$\widetilde{A}=\left(\begin{array}{lcr}
 \pi^+P+G  & K \\
 T  &  \widetilde{S}
\end{array}\right)
\in B^{m,d},
 \widetilde{A}'=\left(\begin{array}{lcr}
\pi^+P'+G'  & K'  \\
 T'  &  \widetilde{S}'
\end{array} \right)
\in B^{m',d'}.$$ The composition $\widetilde{A}\widetilde{A}'$ is obtained by
multiplication of the matrices(for more details see [14]). For
example $\pi^+P\circ G'$ and $G\circ G'$ are singular Green
operators of type $d'$ and
$$\pi^+P\circ\pi^+P'=\pi^+(PP')+L(P,P').$$ Here $PP'$ is the usual
composition of pseudodifferential operators and $L(P,P')$ called
leftover term is a singular Green operator of type $m'+d$. For our case, $P,P'$ are classical pseudo differential operators, in other words $\pi^+P\in \mathcal{B}^{\infty}$ and $\pi^+P'\in \mathcal{B}^{\infty}$ .

In the following, write $\pi^+D^{-1}=\left(\begin{array}{lcr}
  \pi^+D^{-1}  & 0  \\
   0  &  0
\end{array}\right)$.
Let $M$ be a compact manifold with boundary $\partial M$. We assume that the metric $g^{M}$ on $M$ has
the following form near the boundary
 \begin{equation}
 g^{M}=\frac{1}{h(x_{n})}g^{\partial M}+dx _{n}^{2} ,
\end{equation}
where $g^{\partial M}$ is the metric on $\partial M$. Let $U\subset
M$ be a collar neighborhood of $\partial M$ which is diffeomorphic $\partial M\times [0,1)$. By the definition of $h(x_n)\in C^{\infty}([0,1))$
and $h(x_n)>0$, there exists $\tilde{h}\in C^{\infty}\big((-\varepsilon,1)\big)$ such that $\tilde{h}|_{[0,1)}=h$ and $\tilde{h}>0$ for some
sufficiently small $\varepsilon>0$. Then there exists a metric $\hat{g}$ on $\hat{M}=M\bigcup_{\partial M}\partial M\times
(-\varepsilon,0]$ which has the form on $U\bigcup_{\partial M}\partial M\times (-\varepsilon,0 ]$
 \begin{equation}
\hat{g}=\frac{1}{\tilde{h}(x_{n})}g^{\partial M}+dx _{n}^{2} ,
\end{equation}
such that $\hat{g}|_{M}=g$.
We fix a metric $\hat{g}$ on the $\hat{M}$ such that $\hat{g}|_{M}=g$.

Consider the $n-1$-form
$$\sigma(\xi)=\sum\limits^{n}_{j=1}(-1)^{j+1}\xi_{j}d\xi_{1}
\wedge\cdots\wedge\widehat{d\xi_{j}}\wedge\cdots\wedge d\xi_{n},$$
where the hat indicates that the corresponding factor has been omitted.

Restricted $\sigma(\xi)$ to the $n-1$-dimensional unit sphere $|\xi|=1$, $\sigma(\xi)$ gives the volume form on $|\xi|=1$.
Denoting by $|\xi'|=1$ and $\sigma(\xi')$ the $n-2$-dimensional unit sphere and the corresponding $n-2$-form.

Denote by $\mathcal{B}^{\infty}$ the algera of all operators in Boutet de Monvel's calculus (with integral order) and by $\mathcal{B}^{-\infty}$ the ideal of all smoothing operators in $\mathcal{B}^{\infty}$.
Now we recall the main theorem in \cite{FGLS}.

\begin{thm}\label{th:32}{\bf(Fedosov-Golse-Leichtnam-Schrohe)}
 Let $X$ and $\partial X$ be connected, ${\rm dim}X=n\geq3$,
 $A=\left(\begin{array}{lcr}\pi^+P+G &   K \\
T &  S    \end{array}\right)$ $\in \mathcal{B}$, and denote by $p$, $b$ and $s$ the local symbols of $P,~G$ and $S$ respectively.
 Define:
 \begin{eqnarray}
{\rm{\widetilde{Wres}}}(A)&=&\int_X\int_{\bf |\xi|=1}{\rm{tr}}_E\left[p_{-n}(x,\xi)\right]\sigma(\xi)dx \nonumber\\
&&+2\pi\int_ {\partial X}\int_{\bf |\xi'|=1}\left\{{\rm tr}_E\left[({\rm{tr}}b_{-n})(x',\xi')\right]+{\rm{tr}}
_F\left[s_{1-n}(x',\xi')\right]\right\}\sigma(\xi')dx',
\end{eqnarray}
Then

~~ a) ${\rm \widetilde{Wres}}([A,B])=0 $, for any $A,B\in\mathcal{B}$;

~~ b) It is a unique continuous trace on
$\mathcal{B}/\mathcal{B}^{-\infty}$.
\end{thm}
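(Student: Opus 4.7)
\emph{Plan.} The statement has two parts. For (a) I would unpack $[A,B]$ in matrix form and show that the interior and boundary contributions to $\widetilde{\mathrm{Wres}}([A,B])$ cancel at the symbol level. For (b) I would reduce uniqueness to Wodzicki's uniqueness theorem, applied first to a closed double of $X$ and then to the boundary $\partial X$.

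\emph{Trace property.} Write $A$ and $B$ as the indicated $2\times 2$ matrices with entries $(\pi^+P+G,K,T,S)$ and $(\pi^+P'+G',K',T',S')$. Using the composition rules $\pi^+P\circ\pi^+P'=\pi^+(PP')+L(P,P')$ together with the explicit forms of $G\circ G'$, $\pi^+P\circ G'$, $K\circ T'$, $T\circ K'$, and $S\circ S'$, I would decompose $\widetilde{\mathrm{Wres}}([A,B])$ into an interior piece
\begin{equation*}
\int_X\int_{|\xi|=1}\mathrm{tr}\bigl[(p\star p'-p'\star p)_{-n}\bigr]\sigma(\xi)\,dx,
\end{equation*}
where $\star$ denotes the symbol star-product, and a boundary piece gathering all contributions at $\partial X$. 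By Wodzicki's theorem applied chart by chart on a closed extension, the interior integrand is a total $(x,\xi)$-divergence, so Stokes reduces it to a boundary integral in the symbols at $x_n=0$. I would then verify that this correction exactly cancels the remaining boundary piece, invoking the projection identities $\pi^++\pi^-=\mathrm{Id}$ and $(\pi^\pm)^2=\pi^\pm$ applied to the leftover symbol of $L(P,P')$, together with Wodzicki's commutator identity on $\partial X$ applied to $[s,s']$. The cancellation reduces to a Cauchy contour manipulation in the $H^+/H^-$ framework set up at the start of Section 2.

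\emph{Uniqueness.} Let $\tau$ be any continuous trace on $\mathcal{B}/\mathcal{B}^{-\infty}$. I would first restrict $\tau$ to the ideal $\mathcal{I}$ of operators with $G=K=T=S=0$ whose interior $P$ is compactly supported in $X\setminus\partial X$; such operators extend to classical $\Psi$DOs on the closed double $\widehat{X}=X\cup_{\partial X}X$, so by Wodzicki's uniqueness on $\widehat{X}$ one obtains $\tau|_{\mathcal{I}}=c\cdot\mathrm{Wres}$ for a constant $c$. Then $\tau-c\cdot\widetilde{\mathrm{Wres}}$ vanishes on $\mathcal{I}$ and descends to a continuous trace on the quotient $\mathcal{B}/(\mathcal{I}+\mathcal{B}^{-\infty})$, which by the short exact sequence structure of Boutet de Monvel's algebra reduces to a trace on the algebra of classical $\Psi$DOs on $\partial X$. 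A second application of Wodzicki's uniqueness on $\partial X$, calibrated by the factor $2\pi$ built into $\widetilde{\mathrm{Wres}}$, completes the identification.

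\emph{Main obstacle.} The technical heart of the argument is step (a): the explicit symbolic calculation of the leftover operator $L(P,P')$ and of the singular Green compositions, followed by the Cauchy-contour cancellation against the interior Stokes correction. This requires careful use of the transmission property of $p,p'$, the $H^+/H^-_0$ decomposition, and the projectors $\pi^\pm,\pi'$ restricted to the boundary cosphere bundle, and is essentially the content of the Fedosov-Golse-Leichtnam-Schrohe paper cited as \cite{FGLS}; I would therefore import the cancellation lemma from there rather than redoing the symbolic bookkeeping by hand.
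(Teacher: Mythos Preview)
The paper does not prove this theorem at all: it is stated as a background result, introduced with ``Now we recall the main theorem in \cite{FGLS}'', and immediately used without any argument. So there is no ``paper's own proof'' to compare against; the paper simply cites Fedosov--Golse--Leichtnam--Schrohe.

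Your sketch is a reasonable high-level outline of how the original \cite{FGLS} argument proceeds, and you yourself acknowledge at the end that the technical core would be imported from that reference. In the context of the present paper that is exactly the right attitude: Theorem~\ref{th:32} is quoted, not proved, and any attempt to supply a full proof here would be out of scope. If you want to align with the paper, you should simply cite \cite{FGLS} and move on.
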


\indent Let $p_{1},p_{2}$ be nonnegative integers and $p_{1}+p_{2}\leq n$. Then by Section 2.1 of \cite{Wa3}, we have\\

\noindent {\bf Definition 2.2.} Lower-dimensional volumes of spin manifolds with boundary  are defined by
\begin{align}\label{a18}
{\rm Vol}^{(p_1,p_2)}_nM:=\widetilde{{\rm Wres}}[\pi^+D^{-p_1}\circ\pi^+D^{-p_2}].
\end{align}
Denote by $\sigma_{l}(A)$ the $l$-order symbol of an operator A. Similar to (2.1.7) in \cite{Wa3}, we have that
\begin{align}\label{a16}
\widetilde{{\rm Wres}}[\pi^+D^{-p_1}\circ\pi^+D^{-p_2}]=\int_M\int_{|\xi|=1}{\rm
trace}_{S(TM)}[\sigma_{-n}(D^{-p_1-p_2})]\sigma(\xi)dx+\int_{\partial
M}\Phi,
\end{align}
where
 \begin{eqnarray}\label{a17}
\Phi&=&\int_{|\xi'|=1}\int^{+\infty}_{-\infty}\sum^{\infty}_{j, k=0}
\sum\frac{(-i)^{|\alpha|+j+k+1}}{\alpha!(j+k+1)!}
 {\rm trace}_{S(TM)}
\Big[\partial^j_{x_n}\partial^\alpha_{\xi'}\partial^k_{\xi_n}
\sigma^+_{r}(D^{-p_1})(x',0,\xi',\xi_n)\nonumber\\
&&\times\partial^\alpha_{x'}\partial^{j+1}_{\xi_n}\partial^k_{x_n}\sigma_{l}
(D^{-p_2})(x',0,\xi',\xi_n)\Big]d\xi_n\sigma(\xi')dx',
\end{eqnarray}
and the sum is taken over $r-k+|\alpha|+\ell-j-1=-n,r\leq-p_{1},\ell\leq-p_{2}$.

Since $[\sigma_{-n}(D^{-p_1-p_2})]|_M$ has the same expression as $\sigma_{-n}(D^{-p_1-p_2})$ in the case of manifolds without
boundary, so locally we can compute the first term by \cite{Ka}, \cite{KW}, \cite{RP}, \cite{Wa3}.

For any fixed point $x_0\in\partial M$, we choose the normal coordinates
$U$ of $x_0$ in $\partial M$ (not in $M$) and compute $\Phi(x_0)$ in the coordinates $\widetilde{U}=U\times [0,1)\subset M$ and with the
metric $\frac{1}{h(x_n)}g^{\partial M}+dx_n^2.$ The dual metric of $g$ on $\widetilde{U}$ is ${h(x_n)}g^{\partial M}+dx_n^2.$  Write
$g_{ij}=g(\frac{\partial}{\partial x_i},\frac{\partial}{\partial x_j});~ g^{ij}=g(dx_i,dx_j)$, then
\begin{equation}
\label{b9}
[g_{ij}]= \left[\begin{array}{lcr}
  \frac{1}{h(x_n)}[g_{ij}^{\partial M}]  & 0  \\
   0  &  1
\end{array}\right];~~~
[g^{ij}]= \left[\begin{array}{lcr}
  h(x_n)[g^{ij}_{\partial M}]  & 0  \\
   0  &  1
\end{array}\right],
\end{equation}
and
\begin{equation}
\label{b10}
\partial_{x_s}g_{ij}^{\partial M}(x_0)=0, 1\leq i,j\leq n-1; ~~~g_{ij}^{TM}(x_0)=\delta_{ij}.
\end{equation}
\indent From \cite{Wa3}, we can get three lemmas.
\begin{lem}{\rm \cite{Wa3}}\label{le:3112}
With the metric $g$ on $M$ near the boundary
\begin{align}
\label{b11}
\partial_{x_j}(|\xi|_{g^M}^2)(x_0)&=\left\{
       \begin{array}{c}
        0,  ~~~~~~~~~~ ~~~~~~~~~~ ~~~~~~~~~~~~~{\rm if }~j<n, \\[2pt]
       h'(0)|\xi'|^{2}_{g^{\partial M}},~~~~~~~~~~~~~~~~~~~~{\rm if }~j=n,
       \end{array}
    \right. \\
\partial_{x_j}[c(\xi)](x_0)&=\left\{
       \begin{array}{c}
      0,  ~~~~~~~~~~ ~~~~~~~~~~ ~~~~~~~~~~~~~{\rm if }~j<n,\\[2pt]
\partial_{x_n}(c(\xi'))(x_{0}), ~~~~~~~~~~~~~~~~~{\rm if }~j=n,
       \end{array}
    \right.
\end{align}
where $\xi=\xi'+\xi_{n}dx_{n}$.
\end{lem}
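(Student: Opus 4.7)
The plan is to handle the two identities separately, leveraging the product structure of the metric near the boundary and the vanishing of tangential derivatives of $g^{\partial M}$ at $x_0$ provided by the normal coordinates.

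For the first identity I would expand
$$|\xi|_{g^M}^2(x) = g^{ij}(x)\xi_i\xi_j = h(x_n)\, g^{ij}_{\partial M}(x')\,\xi_i\xi_j + \xi_n^2$$
using the block form (2.10). For $j<n$, differentiating annihilates the $h(x_n)$ factor immediately, uses $\partial_{x_j} g^{ij}_{\partial M}(x_0)=0$ from (2.11), and leaves $\xi_n^2$ untouched. For $j=n$ only the $h(x_n)$ prefactor contributes, and evaluating at $x_0$ through $g^{ij}_{\partial M}(x_0)\xi_i\xi_j = |\xi'|^2_{g^{\partial M}}$ yields the claimed $h'(0)|\xi'|^2_{g^{\partial M}}$.

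For the second identity I would trivialize the spinor bundle via an orthonormal frame $\{f_\alpha\}$ near $x_0$ that is compatible with the product structure: set $f_n=\partial_{x_n}$ and $f_\alpha(x',x_n)=\sqrt{h(x_n)}\,\tilde f_\alpha(x')$ for $\alpha<n$, where $\{\tilde f_\alpha\}$ is obtained from $\partial_{x_\alpha}|_{x_0}$ by parallel transport along radial geodesics of $g^{\partial M}$. In this trivialization the matrices $\gamma^\alpha:=c(f^\alpha)$ are genuinely constant. Writing $dx^i=F^i_\alpha f^\alpha$ gives $F^n_n=1$, $F^i_\alpha(x',x_n)=\sqrt{h(x_n)}\,\tilde F^i_\alpha(x')$ for $i,\alpha<n$, and the remaining entries zero. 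Hence
$$c(dx^i)=\sqrt{h(x_n)}\,\tilde F^i_\alpha(x')\,\gamma^\alpha\quad(i<n),\qquad c(dx^n)=\gamma^n.$$
For $j<n$, differentiating kills the $\sqrt{h(x_n)}$ factor and leaves $\partial_{x_j}\tilde F^i_\alpha(x_0)$, which vanishes because the parallel-transport frame has zero tangential derivatives at the origin of normal coordinates (equivalently, the Christoffel symbols of $g^{\partial M}$ vanish at $x_0$), giving $\partial_{x_j} c(\xi)(x_0)=0$. For $j=n$, since $c(dx^n)=\gamma^n$ is $x$-independent, the $\xi_n$-component of $c(\xi)$ does not contribute to $\partial_{x_n}$, so $\partial_{x_n} c(\xi)(x_0)=\partial_{x_n} c(\xi')(x_0)$.

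The main conceptual step — and the one requiring the most care — is the Clifford identity, because the expression $\partial_{x_j}[c(\xi)]$ is only meaningful once a trivialization of the spinor bundle is fixed; the parallel-transport choice of orthonormal frame is exactly what makes the tangential derivatives of the transition matrix $\tilde F$ vanish at $x_0$ and thereby forces $\partial_{x_j}[c(\xi)](x_0)=0$ for $j<n$. Everything else is routine bookkeeping with the block form of the metric in (2.10).
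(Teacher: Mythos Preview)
The paper does not give its own proof of this lemma; it is imported wholesale from \cite{Wa3} (the text reads ``From \cite{Wa3}, we can get three lemmas'' and then states them without argument). So there is nothing in the present paper to compare your proposal against.

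That said, your argument is correct and is the standard way to establish the result. The metric identity drops out of the block form of $(g^{ij})$ together with $\partial_{x_s}g^{\partial M}_{ij}(x_0)=0$, exactly as you say. For the Clifford identity you correctly isolate the only nontrivial point: the symbol $\partial_{x_j}[c(\xi)]$ is meaningless until a trivialization of $S(TM)$ is fixed, and the frame $f_n=\partial_{x_n}$, $f_\alpha=\sqrt{h(x_n)}\,\tilde f_\alpha(x')$ with $\tilde f_\alpha$ parallel along radial geodesics of $g^{\partial M}$ is precisely the choice that makes (i) the Clifford matrices $\gamma^\alpha=c(f^\alpha)$ constant, (ii) the tangential derivatives $\partial_{x_j}\tilde F^i_\alpha(x_0)$ vanish, and (iii) $c(dx^n)$ independent of $x$. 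Your conclusion $\partial_{x_n}c(\xi)(x_0)=\partial_{x_n}c(\xi')(x_0)$ then follows. As a sanity check, in your frame one finds $\partial_{x_n}c(\xi')(x_0)=\tfrac{1}{2}h'(0)\,c(\xi')(x_0)$, which reproduces the trace identity ${\rm trace}[\partial_{x_n}c(\xi')\,c(\xi')](x_0)|_{|\xi'|=1}=-2h'(0)$ used later in the paper (equation (\ref{32})), so your trivialization is indeed the one implicitly in force throughout Section~4.
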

\begin{lem}{\rm \cite{Wa3}}\label{le:32}With the metric $g$ on $M$ near the boundary
\begin{align}
\label{b12}
\omega_{s,t}(e_i)(x_0)&=\left\{
       \begin{array}{c}
        \omega_{n,i}(e_i)(x_0)=\frac{1}{2}h'(0),  ~~~~~~~~~~ ~~~~~~~~~~~{\rm if }~s=n,t=i,i<n, \\[2pt]
       \omega_{i,n}(e_i)(x_0)=-\frac{1}{2}h'(0),~~~~~~~~~~~~~~~~~~~{\rm if }~s=i,t=n,i<n,\\[2pt]
    \omega_{s,t}(e_i)(x_0)=0,~~~~~~~~~~~~~~~~~~~~~~~~~~~other~cases,~~~~~~~~~
       \end{array}
    \right.
\end{align}
where $(\omega_{s,t})$ denotes the connection matrix of Levi-Civita connection $\nabla^L$.
\end{lem}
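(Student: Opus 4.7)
The plan is to compute the Christoffel symbols of the warped metric $g=\tfrac{1}{h(x_n)}g^{\partial M}+dx_n^2$ at $x_0$ and then translate them into the connection one-forms of an adapted orthonormal frame. First I would set up the frame: start with an orthonormal frame $\{\tilde e_i\}_{i<n}$ of $g^{\partial M}$ that is synchronous at $x_0$ (for instance, obtained by parallel transport of $\partial_{x_i}|_{x_0}$ along the $g^{\partial M}$-geodesics out of $x_0$, so that $\nabla^{g^{\partial M}}_{\tilde e_j}\tilde e_k(x_0)=0$ for all $j,k<n$), extend it to $\widetilde U=U\times[0,1)$ to be independent of $x_n$ and tangent to the slices $x_n=\mathrm{const}$, and then set $e_n:=\partial_{x_n}$ and $e_i:=\sqrt{h(x_n)}\,\tilde e_i$ for $i<n$. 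By (\ref{b9}) this is orthonormal for $g$, and (\ref{b10}) gives $h(0)=1$, hence $e_i(x_0)=\partial_{x_i}|_{x_0}$.

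Next I would compute the Christoffel symbols of $g$ at $x_0$. The choice of normal coordinates on $\partial M$ together with the product form of $g$ implies that the only nonzero first partials of $g_{ij}$ at $x_0$ are $\partial_{x_n}g_{ij}(x_0)=-h'(0)\delta_{ij}$ for $i,j<n$. Substituting into $\Gamma^k_{ij}=\tfrac12 g^{kl}(\partial_i g_{jl}+\partial_j g_{il}-\partial_l g_{ij})$ with $g^{kl}(x_0)=\delta^{kl}$, the only non-vanishing symbols at $x_0$ are
\begin{equation*}
\Gamma^n_{ij}(x_0)=\tfrac12 h'(0)\delta_{ij},\quad i,j<n,\qquad \Gamma^k_{in}(x_0)=-\tfrac12 h'(0)\delta^k_i,\quad i,k<n.
\end{equation*}

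Finally I would read off the connection one-forms via $\omega_{s,t}(e_i)=g(\nabla^L_{e_i}e_t,e_s)$. For $i,k<n$, using $\tilde e_i(\sqrt h)=0$, the synchronous property $\partial_i\tilde e_k^\beta(x_0)=0$ for $\beta<n$, and $\tilde e_k^n\equiv 0$, one obtains $\nabla^L_{e_i}e_k(x_0)=\Gamma^n_{ik}(x_0)\,e_n(x_0)=\tfrac12 h'(0)\delta_{ik}\,e_n(x_0)$. Expanding $\nabla^L_{e_n}e_k=\tfrac{h'(x_n)}{2\sqrt{h(x_n)}}\tilde e_k+\sqrt{h(x_n)}\,\nabla^L_{\partial_{x_n}}\tilde e_k$ and evaluating at $x_0$, the first summand equals $+\tfrac12 h'(0)\,e_k(x_0)$ while the second equals $\sqrt{h(0)}\,\Gamma^k_{nk}(x_0)\,\partial_{x_k}|_{x_0}=-\tfrac12 h'(0)\,e_k(x_0)$, so they cancel; also $\nabla^L_{e_n}e_n(x_0)=0$ since $\Gamma^\beta_{nn}(x_0)=0$. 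Therefore $\omega_{n,k}(e_i)(x_0)=\tfrac12 h'(0)\delta_{ik}$, $\omega_{j,k}(e_i)(x_0)=0$ for $j,k,i<n$, and $\omega_{s,t}(e_n)(x_0)=0$; the remaining entries follow from the antisymmetry $\omega_{s,t}=-\omega_{t,s}$ of the Levi-Civita connection in an orthonormal frame, and they match the stated values.

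The main obstacle is the bookkeeping in the last step: one must track carefully how the rescaling $\sqrt{h(x_n)}$ in the definition of $e_i$ interacts with the coordinate-frame Christoffel symbols, so that the $+\tfrac12 h'(0)$ coming from differentiating $\sqrt h$ along $\partial_{x_n}$ cancels precisely the contribution of $\Gamma^k_{in}$, and so that the synchronous choice of $\tilde e_i$ on $\partial M$ is exactly what eliminates the otherwise possibly nonzero $\omega_{j,k}(e_i)(x_0)$ with $j,k,i<n$.
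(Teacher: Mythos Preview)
Your argument is correct and complete. The frame construction, the computation of the Christoffel symbols at $x_0$ (which indeed reproduces Lemma~\ref{lem1}), and the translation into the connection one-forms via $\omega_{s,t}(e_i)=g(\nabla^L_{e_i}e_t,e_s)$ all check out, including the delicate cancellation in $\nabla^L_{e_n}e_k(x_0)$ between the $\partial_{x_n}\sqrt{h}$ contribution and the $\Gamma^k_{nk}$ contribution.

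As for comparison with the paper: the paper does not actually prove this lemma here. It is stated with a citation to \cite{Wa3} and used as input for the boundary-term computation, so there is no in-paper proof to compare your approach against. What you have written is precisely the kind of direct warped-product computation that underlies the cited result. Your explicit attention to the synchronous choice of $\tilde e_i$ on $\partial M$ (which is what kills the tangential $\omega_{j,k}(e_i)$ terms) and to the $x_n$-independence of $\tilde e_i$ (which makes $\partial_{x_n}\tilde e_k^\alpha=0$) is exactly the right bookkeeping, and it also recovers Lemma~\ref{lem1} along the way as a byproduct.
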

\begin{lem}{\rm \cite{Wa3}}\label{lem1} When $i<n,$ then
\begin{align}
\label{b13}
\Gamma_{st}^k(x_0)&=\left\{
       \begin{array}{c}
        \Gamma^n_{ii}(x_0)=\frac{1}{2}h'(0),~~~~~~~~~~ ~~~~~~~~~~~{\rm if }~s=t=i,k=n, \\[2pt]
        \Gamma^i_{ni}(x_0)=-\frac{1}{2}h'(0),~~~~~~~~~~~~~~~~~~~{\rm if }~s=n,t=i,k=i,\\[2pt]
        \Gamma^i_{in}(x_0)=-\frac{1}{2}h'(0),~~~~~~~~~~~~~~~~~~~{\rm if }~s=i,t=n,k=i,\\[2pt]
       \end{array}
    \right.
\end{align}
in other cases, $\Gamma_{st}^i(x_0)=0$.
\end{lem}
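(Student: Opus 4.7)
\medskip

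The plan is to derive these Christoffel symbol values directly from the standard formula
$$\Gamma_{st}^{k} = \tfrac{1}{2} g^{k\ell}\bigl(\partial_{s} g_{t\ell} + \partial_{t} g_{s\ell} - \partial_{\ell} g_{st}\bigr),$$
applied to the specific warped-product metric $g^{M} = \frac{1}{h(x_{n})} g^{\partial M} + dx_{n}^{2}$ near $\partial M$, and then to evaluate everything at the base point $x_{0} \in \partial M$ using the normalization $g_{ij}^{TM}(x_{0}) = \delta_{ij}$ (which forces $h(0) = 1$) together with the boundary normal coordinate condition $\partial_{x_{s}} g_{ij}^{\partial M}(x_{0}) = 0$ for $1 \le s \le n-1$.

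First I would record the components of $g$ and $g^{-1}$ at a point of $\widetilde{U}$ from \eqref{b9}: off the diagonal block $g_{in} = 0$ for $i<n$, while $g_{nn} = 1$ and $g_{ij} = h(x_{n})^{-1} g_{ij}^{\partial M}$ for $i,j < n$, with the analogous formulas for the dual metric. Next I would tabulate all partial derivatives $\partial_{x_{s}} g_{ij}(x_{0})$: tangential derivatives ($s < n$) vanish by the normal coordinate condition on $\partial M$, and the only surviving derivatives are the normal ones, which by a direct differentiation of $h(x_{n})^{-1}$ give $\partial_{x_{n}} g_{ij}(x_{0}) = -h'(0)\, \delta_{ij}$ for $i,j < n$, and $\partial_{x_{n}} g_{nn}(x_{0}) = 0$.

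With these derivatives in hand, the computation reduces to plugging into the Christoffel formula case by case. For $\Gamma^{n}_{ii}$ with $i<n$, only the $\ell = n$ term survives because $g^{n\ell} = \delta_{n\ell}$ at $x_{0}$, and only the $-\partial_{n} g_{ii}$ piece is non-zero, producing $+\tfrac{1}{2} h'(0)$. For $\Gamma^{i}_{ni} = \Gamma^{i}_{in}$ with $i<n$, the only nonvanishing contribution comes from $\ell = i$ via the term $\partial_{n} g_{ii}$, giving $-\tfrac{1}{2} h'(0)$. In every remaining configuration, either all three partial derivatives vanish at $x_{0}$ (tangential indices only) or the mixed off-diagonal entries of $g^{-1}$ kill the contribution, yielding $\Gamma^{i}_{st}(x_{0}) = 0$.

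The computation is entirely algorithmic, so there is no real obstacle; the main point of care is bookkeeping, namely checking that the only nonzero first derivatives of $g$ at $x_{0}$ are $\partial_{x_{n}} g_{ij}$ with $i,j<n$ and isolating which of the three terms in the Christoffel formula actually survives in each case. Lemma \ref{le:32} offers an independent cross-check, since the listed $\omega_{s,t}(e_{i})(x_{0})$ are precisely the Levi-Civita connection one-form components that one reads off from these Christoffel symbols via $\omega_{s,t}(e_{i}) = g(\nabla^{L}_{e_{i}} e_{t}, e_{s})$ at $x_{0}$, confirming the sign conventions.
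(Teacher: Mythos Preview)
Your computation is correct and complete: the only nonvanishing first derivatives of $g$ at $x_{0}$ are indeed $\partial_{x_{n}} g_{ij}(x_{0}) = -h'(0)\,\delta_{ij}$ for $i,j<n$, and plugging these into the standard Christoffel formula yields exactly the three listed values, with all other symbols vanishing.

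As for comparison with the paper, there is nothing to compare: the present paper does not prove this lemma at all but simply quotes it from \cite{Wa3} (it is introduced with ``From \cite{Wa3}, we can get three lemmas''). Your direct coordinate computation is the natural proof and is essentially what is carried out in the cited reference. The cross-check you mention against Lemma~\ref{le:32} is a nice touch, though note that in the orthonormal frame $e_{i} = \sqrt{h(x_{n})}\,\partial_{x_{i}}$ (for $i<n$) the relation between $\omega_{s,t}(e_{i})$ and the coordinate Christoffel symbols involves an extra factor from the frame normalization, so the match in signs at $x_{0}$ relies on $h(0)=1$.
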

\section{Transformation operators and its Lichnerowicz formula}
Let's revisit the definition of the Dirac operator. Consider $M$, an $n$-dimensional ($n\geq 3$)
oriented compact spin Riemannian manifold equipped with a Riemannian metric $g^{M}$ and denote $\nabla^L$ as the Levi-Civita connection associated with $g^{M}$. In local coordinates $\{x_1,\cdots, x_n\}$ and a fixed orthonormal frame $\{e_1,\cdots,e_n\}$, the connection matrix $(\omega_{s,t})$ is defined by
\begin{equation}
\label{a2}
\nabla^L(e_1,\cdots,e_n)= (e_1,\cdots,e_n)(\omega_{s,t}).
\end{equation}
\indent Let $\epsilon (e_j^*)$,~$\iota (e_j^*)$ be the exterior and interior multiplications respectively, where $e_j^*=g^{TM}(e_j,\cdot)$. Let $c(e_i)$ denote the Clifford action.
Write
\begin{equation}\label{a3}
\widehat{c}(e_j)=\epsilon (e_j^* )+\iota
(e_j^*);~~
c(e_j)=\epsilon (e_j^* )-\iota (e_j^* ),
\end{equation}
which satisfies
\begin{align}
\label{a4}
&\widehat{c}(e_i)\widehat{c}(e_j)+\widehat{c}(e_j)\widehat{c}(e_i)=2g^{M}(e_i,e_j);~~\nonumber\\
&c(e_i)c(e_j)+c(e_j)c(e_i)=-2g^{M}(e_i,e_j);~~\nonumber\\
&c(e_i)\widehat{c}(e_j)+\widehat{c}(e_j)c(e_i)=0.
\end{align}

In an $n$-dimensional spin manifold $M$, one can define two different first-order differential operators on spinor fields. The first one is the Dirac operator,
citing \cite{Y}, we present the Dirac operator as follows:
\begin{align}
\label{a5}
&D=\sum^n_{i=1}c(e_i)[e_i-\frac{1}{4}\sum_{s,t}\omega_{s,t}
(e_i)c(e_s)c(e_t)].
\end{align}
And the second one is the Penrose operator written as follows:
\begin{align}\label{z2}
&P_X:=\nabla^{L}_{X}-\frac{1}{n}\widetilde{X}\cdot D
\end{align}
with respect to a vector field $X$ and its metric dual $\widetilde{X}$. The spinor fields that are in the kernel of the Penrose operator are called twistor spinors and they satisfy the following twistor equation:
\begin{align}\label{z3}
\nabla^{L}_{X}{\psi}=\frac{1}{n}\widetilde{X}\cdot D\psi
\end{align}
for a spinor $\psi$(for detailed definition, see \cite{UE}).


Let us consider a function $f$ that satisfies the following conformal Laplace equation in $n$-dimensions
\begin{align}\label{z1}
\Delta f-\frac{n-2}{4(n-1)}Rf=0
\end{align}
and a twistor spinor $\psi$ that satisfies (\ref{z3}). Then, the following operator
\begin{align}\label{z4}
\widetilde{D}=\frac{n-2}{n}fD+c(df).
\end{align}
transforms the twistor spinors to harmonic spinors. Namely, if $\psi$ is a twistor spinor,
then $\widetilde{D}\psi$ satisfies the massless Dirac equation $D\widetilde{D}\psi=0$(for detailed proof, see the formula (54) of \cite{UE}).

In an $4$-dimensional spin manifold $M$, let us consider transformation operators
$$\widetilde{D}=\frac{1}{2}fD+c(df),$$
where $c(df)$ denotes a Clifford action on the manifold $M$ and $f$ is a nonzero smooth function on $M$.
Then, we can express the square of $\widetilde{D}$ as follows:
\begin{align}\label{a6}
\widetilde{D}^2=[\frac{1}{2}fD+c(df)]^2=[\frac{1}{2}D f+\frac{1}{2}c(df)]^2
&=\frac{1}{4}\overline{D}^2f^2,
\end{align}
where $\overline{D}^2=D^2+c(df)Df^{-1}-\frac{1}{f^2}|df|^2$.

Recall the definition of the Dirac operator $D^{2}$ in \cite{Ka}, \cite{KW} and \cite{Wa4}, we have
\begin{equation}\label{a7}
D^{2}=-\sum_{i,j}g^{i,j}\Big[\partial_{i}\partial_{j}+2\sigma_{i}\partial_{j}+(\partial_{i}\sigma_{j})+\sigma_{i}\sigma_{j}
    -\Gamma_{i,j}^{k}\partial_{i}-\Gamma_{i,j}^{k}\sigma_{k}\Big]+\frac{1}{4}s,
\end{equation}
\begin{align}\label{a8}
c(df)Df^{-1}
=&\sum_{i,j}g^{i,j}\Big[c(df)c(\partial_{i})f^{-1}\Big]\partial_{j}
+\sum_{i,j}g^{i,j}\Big[c(df)c(\partial_{i})\partial_{j}\big(f^{-1}\big)+c(df)c(\partial_{i})
\sigma_{j}f^{-1}\Big],
\end{align}
where $\partial_{i}$ denotes a natural local frame on the tangent bundle $TM$, $\sigma_{i}:=-\frac{1}{4}\sum_{s,t}\omega_{s,t}(\partial_{i})e_se_t$, $g_{i,j}=g(\partial_{i},\partial_{j})$ and $(g^{i,j})_{1\leq i,j\leq n}$ is the inverse matrix associated with the metric matrix $(g_{i,j})_{1\leq i,j\leq n}$ on $M$, $\Gamma_{ kl}^{j}$ is the  Christoffel coefficient of $\nabla^{L}$.

Let
$$\nabla^{S}_{\partial_{i}}=\partial_{i}+\sigma_{i},~
\partial^{j}=g^{i,j}\partial_{i},~\sigma^{i}=g^{i,j}\sigma_{j},~\Gamma^{k}=g^{i,j}\Gamma^{k}_{i,j}.$$
Combining (\ref{a5}), (\ref{a6}), (\ref{a7}) and (\ref{a8}), we have
\begin{align}\label{az9}
\overline{D}^2
=& -g^{i,j}\partial_{i}\partial_{j}+\Big[-2\sigma^{j}+\Gamma^{j}
+c(df)c(\partial^{j})f^{-1}\Big]\partial_{j}
+g^{i,j}\Big[-(\partial_{i}\sigma_{j})-\sigma_{i}\sigma_{j}\nonumber\\
&+\Gamma_{i,j}^{k}\sigma_{k}+c(df)c(\partial_{i})\partial_{j}\big(f^{-1}\big)
+c(df)c(\partial_{i})\sigma_{j}f^{-1}\Big]+\frac{1}{4}s-\frac{1}{f^2}|df|^2 .
\end{align}

Next, let's recall the basic notions of Laplace type operators. Consider a smooth, compact, oriented Riemannian manifold $M$ of dimension $n$ without boundary, and let $V'$ be a vector bundle over $M$. Any differential operator $P$ of Laplace type locally takes the form:
\begin{equation}\label{a10}
P=-(g^{ij}\partial_i\partial_j+A^i\partial_i+B),
\end{equation}
where $A^{i}$ and $B$ are smooth sections of $\textrm{End}(V')$ (endomorphisms) over $M$.

If $P$ is a Laplace type operator with the form above, then there exists a unique connection $\nabla$ on $V'$ and a unique endomorphism $E$ such that:
 \begin{equation}\label{a11}
P=-\Big[g^{ij}(\nabla_{\partial_{i}}\nabla_{\partial_{j}}-
 \nabla_{\nabla^{L}_{\partial_{i}}\partial_{j}})+E\Big].
\end{equation}
Moreover
(with local frames of $T^{*}M$ and $V'$), $\nabla_{\partial_{i}}=\partial_{i}+\omega_{i} $
and $E$ are related to $g^{ij}$, $A^{i}$ and $B$ through
 \begin{eqnarray}\label{a12}
&&\omega_{i}=\frac{1}{2}g_{ij}\big(A^{i}+g^{kl}\Gamma_{ kl}^{j} {\rm id}\big),\nonumber\\
&&E=B-g^{ij}\big(\partial_{i}(\omega_{j})+\omega_{i}\omega_{j}-\omega_{k}\Gamma_{ ij}^{k} \big).
\end{eqnarray}

Now, let's proceed to establish the main theorem in this section.

\begin{thm}\label{th:311}
 Let $f$ be a nonzero smooth function on $M$ and $\widetilde{D}=\frac{1}{2}fD+c(df)$ be transformation operators, $\overline{D}^{2}$ as defined in (\ref{az9}), then we have
 \begin{align}\label{z6}
\overline{D}^2=&-\Big[g^{ij}(\nabla_{\partial_{i}}\nabla_{\partial_{j}}-
\nabla_{\nabla^{L}_{\partial_{i}}\partial_{j}})\Big]
+\frac{1}{4}s-\frac{1}{f^2}|df|^2-\frac{1}{2}\big[\sum^{n}_{j=1}
c(\nabla^{L}_{e_{j}}grad~f)c(e_{j})f^{-1}+c(df)c(df^{-1})\big]\nonumber\\
&+\frac{1}{4}\sum^{n}_{j=1}\big[c(df)c(e_{j})f^{-1}\big]^{2},
\end{align}
where $s$ is the scalar curvature, $\nabla_{\partial_{i}}$ is defined by (\ref{a15}) and $X=\partial_{i}$.
\end{thm}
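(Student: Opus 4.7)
The strategy is to apply the Laplace-type decomposition machinery of (\ref{a11})--(\ref{a12}) directly to the explicit form (\ref{az9}) of $\overline{D}^2$. Matching the template $P = -(g^{ij}\partial_i\partial_j + A^i\partial_i + B)$ against (\ref{az9}) one reads off
\[A^j = 2\sigma^j - \Gamma^j - c(df)c(\partial^j)f^{-1},\]
and a corresponding $B$. Substituting into the first formula of (\ref{a12}) yields
\[\omega_i = \sigma_i - \tfrac{1}{2}c(df)c(\partial_i)f^{-1},\]
so $\nabla_{\partial_i}$ is the spin connection modified by a Clifford perturbation. This fixes the connection appearing in the Laplace part of the claimed formula.

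The bulk of the proof is to compute $E = B - g^{ij}\bigl(\partial_i\omega_j + \omega_i\omega_j - \omega_k\Gamma^k_{ij}\bigr)$ from the second formula of (\ref{a12}). The purely $\sigma$-type pieces of $B$ cancel exactly against the $\sigma$-type pieces of $\partial_i\omega_j + \omega_i\omega_j - \omega_k\Gamma^k_{ij}$ in the classical Lichnerowicz fashion for $D^2$, leaving the $-\tfrac{s}{4}$ already carried by $B$; the $\tfrac{|df|^2}{f^2}$ summand of $B$ survives unchanged. What remains are the terms involving $c(df)c(\partial_i)f^{-1}$.

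To simplify them I fix $x_0 \in M$ and work in normal coordinates at $x_0$ with an orthonormal frame $\{e_i\}$ parallel along radial geodesics, so that $\Gamma^k_{ij}(x_0)=0$, $g^{ij}(x_0)=\delta^{ij}$, $\sigma_i(x_0)=0$, and $c(e_i)$ is parallel at $x_0$. Then the cross terms between $\sigma_j$ and $c(df)c(\partial_i)f^{-1}$ drop. Applying Leibniz to $\partial_i[c(df)c(\partial_j)f^{-1}]$ splits it into a piece $c(\nabla^L_{e_j}\mathrm{grad}\,f)c(e_j)f^{-1}$ (using $\partial_i c(df)|_{x_0} = c(\nabla^L_{\partial_i}df)$ and the metric identification of $df$ with $\mathrm{grad}\,f$) and a piece $c(df)c(e_j)\partial_j(f^{-1})$; the latter combined with the $B$-contribution $-g^{ij}c(df)c(\partial_i)\partial_j(f^{-1})$ produces a multiple of $c(df)c(df^{-1})$ via $\sum_j c(\partial_j)\partial_j(f^{-1}) = c(df^{-1})$. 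The purely quadratic Clifford contribution from $g^{ij}\omega_i\omega_j$ yields $\tfrac{1}{4}\sum_j[c(df)c(e_j)f^{-1}]^2$. Since $E$ is tensorial and $x_0$ was arbitrary, the resulting coordinate-free expression is valid on all of $M$.

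The main obstacle is the careful bookkeeping of signs when collecting the $c(df)$-contributions from $B$, $\partial_i\omega_j$, and $\omega_i\omega_j$; rewriting ordinary partial derivatives as covariant derivatives at $x_0$ is what allows recognizing the $c(\nabla^L_{e_j}\mathrm{grad}\,f)c(e_j)f^{-1}$ summand, after which the remaining Clifford identity $c(df)c(e_j)+c(e_j)c(df)=-2(e_j f)$ is only needed to expand the squared term if one wishes to simplify further.
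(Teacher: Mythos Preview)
Your proposal is correct and follows essentially the same route as the paper: both identify $\overline{D}^2$ with the Laplace-type template (\ref{a10}), read off $A^j$ and $B$ from (\ref{az9}), compute $\omega_i=\sigma_i-\tfrac{1}{2}c(df)c(\partial_i)f^{-1}$ via (\ref{a12}), and then evaluate $E$ in normal coordinates at an arbitrary point $x_0$ (where $\sigma_i$, $\Gamma^k_{ij}$, and derivatives of $c(\partial_j)$ vanish) to obtain the globally defined expression. Your write-up is in fact more explicit about the Leibniz/cancellation bookkeeping than the paper's, but the underlying argument is identical.
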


\begin{proof}
By (\ref{az9}) and (\ref{a12}), we have
\begin{align}\label{a14}
\omega_{i}=&\sigma_i-\frac{1}{2}\big[c(df)c(\partial_i)f^{-1}\big],\nonumber\\
E=&g^{i,j}\Big[\partial_{i}(\sigma_{j})+\sigma_{i}\sigma_{j}-\Gamma_{i,j}^{k}\sigma_{k}
-c(df)c(\partial_{i})\partial_{j}\big(f^{-1}\big)
-c(df)c(\partial_{i})\sigma_{j}f^{-1}\Big]
-\frac{1}{4}s+\frac{1}{f^2}|df|^2\nonumber\\
&-g^{i,j}\bigg\{\partial_{i}
\Big[ \sigma_j-\frac{1}{2}\big[c(df)c(\partial_j)f^{-1}\big]
\Big]+\bigg[\sigma_i-\frac{1}{2}\big[c(df)c(\partial_i)f^{-1}\big]\bigg]
\times\bigg[\sigma_j-\frac{1}{2}\big[c(df)c(\partial_j)f^{-1}\big]\bigg]\nonumber\\
&-\bigg[\sigma_k-\frac{1}{2}\big[c(df)c(\partial_k)f^{-1}\big]\bigg]\Gamma^{k}_{ij}
\bigg\}.
\end{align}
For a smooth vector filed $X$ on $M$, let $c(X)$ denote the Clifford action. So,
\begin{align}\label{a15}
\nabla_{X}=\nabla^{S}_{X}-\frac{1}{2}\big[c(df)c(X)f^{-1}\big].
\end{align}
Since $E$ is globally
defined on $M$, taking normal coordinates at $x_0$, we have
$\sigma^{i}(x_0)=0$, $a^{i}(x_0)=0$, $\partial^{j}[c(\partial_{j})](x_0)=0$,
$\Gamma^k(x_0)=0$, $g^{ij}(x_0)=\delta^j_i$, so that
\begin{align}
E(x_0)
=&\bigg\{-\frac{1}{4}s+\frac{1}{f^2}|df|^2+\frac{1}{2}\big[\sum^{n}_{j=1}
c(\nabla^{L}_{e_{j}}grad~f)c(e_{j})f^{-1}+c(df)c(df^{-1})\big]\nonumber\\
&-\frac{1}{4}\sum^{n}_{j=1}\big[c(df)c(e_{j})f^{-1}\big]^{2}\bigg\}(x_0),
\end{align}
which, together with (\ref{a11}), yields Theorem \ref{th:311}.
\end{proof}

The non-commutative residue of a generalized laplacian $\widetilde{\Delta}$ is expressed as by \cite{Ac}

\begin{equation}
(n-2)\Phi_{2}(\widetilde{\Delta})=(4\pi)^{-\frac{n}{2}}\Gamma(\frac{n}{2})\widetilde{res}(\widetilde{\Delta}^{-\frac{n}{2}+1}),
\end{equation}
where $\Phi_{2}(\widetilde{\Delta})$ denotes the integral over the diagonal part of the second
coefficient of the heat kernel expansion of $\widetilde{\Delta}$.
Now let $\widetilde{\Delta}=\overline{D}^2$. By \cite{Y} and \cite{Ac}, we have
\begin{equation}
\Delta=-g^{ij}(\nabla^L_{i}\nabla^L_{j}-\Gamma_{ij}^{k}\nabla^L_{k}).
\end{equation}
Let's assume that $n=4$. Since $\overline{D}^2$ is a generalized laplacian, we can suppose $\overline{D}^2=\Delta-E$, then we have
\begin{align}
{\rm Wres}(\overline{D}^{-2})=&\frac{(4-2)(4\pi)^{\frac{4}{2}}}{(\frac{4}{2}-1)!}\int_{M}{\rm trace}(\frac{1}{6}s+E_{\overline{D}^2})d{\rm vol_{M}}\nonumber\\
=&32\pi^2
\int_{M}{\rm trace}\bigg\{-\frac{1}{12}s+\frac{1}{f^2}|df|^2+\frac{1}{2}\big[\sum^{4}_{j=1}
c(\nabla^{L}_{e_{j}}grad~f)c(e_{j})f^{-1}+c(df)c(df^{-1})\big]\nonumber\\
&-\frac{1}{4}\sum^{4}_{j=1}\big[c(df)c(e_{j})f^{-1}\big]^{2}\bigg\}d{\rm vol_{M}}\nonumber\\
=&-128\pi^{2}
\int_{M}\bigg\{\frac{1}{12}s+\frac{\Delta f}{2f}
+\frac{1}{2}g(df,df^{-1})+\frac{2|df|^{2}}{f^{2}}\bigg\}d{\rm vol_{M} },
\end{align}
where ${\rm Wres}$ denote the noncommutative residue. Therefore, we obtain
\begin{align}
{\rm Wres}(\widetilde{D}^{-2})
={\rm Wres}(\frac{1}{4}\overline{D}^2f^2)^{-1}
={\rm Wres}(4f^{-2}\overline{D}^{-2}).
\end{align}

\begin{thm} \label{th:312}
For $4$-dimensional compact oriented manifolds without boundary, the following equalities holds:
\begin{align}
{\rm Wres}(\widetilde{D}^2)^{-\frac{n-2}{2}}
=&
\int_{M}\frac{-512\pi^{2}}{f^{2}}\bigg\{\frac{1}{12}s+\frac{\Delta f}{2f}
+\frac{1}{2}g(df,df^{-1})+\frac{2|df|^{2}}{f^{2}}\bigg\}d{\rm vol_{M} },
\end{align}
where $s$ is the scalar curvature.
\end{thm}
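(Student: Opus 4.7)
The plan is to assemble the theorem from two ingredients already prepared in the excerpt: the Lichnerowicz-type decomposition of $\overline{D}^2$ from Theorem \ref{th:311}, and Ackermann's identity relating the Wodzicki residue to the second heat-kernel coefficient $\Phi_2$ of a generalized Laplacian. Because $n=4$ gives $-\frac{n-2}{2}=-1$, the left-hand side is simply ${\rm Wres}(\widetilde{D}^{-2})$, and the strategy is to compute ${\rm Wres}(\overline{D}^{-2})$ first and then translate the result via the factorization $\widetilde{D}^2=\tfrac{1}{4}\overline{D}^2f^2$ obtained in (\ref{a6}).

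First I would specialize Ackermann's formula to $\widetilde{\Delta}=\overline{D}^2$ and $n=4$, which collapses the right-hand side to $32\pi^2\int_M {\rm trace}\bigl(\tfrac{1}{6}s+E_{\overline{D}^2}\bigr)\,d{\rm vol}_M$. The endomorphism $E_{\overline{D}^2}$ is read off from Theorem \ref{th:311}, which already isolates the curvature term $\tfrac{1}{4}s$, the potential $\tfrac{1}{f^2}|df|^2$, the mixed Clifford term $\tfrac{1}{2}\bigl[\sum_j c(\nabla^L_{e_j}\mathrm{grad}\,f)c(e_j)f^{-1}+c(df)c(df^{-1})\bigr]$, and the quadratic term $-\tfrac{1}{4}\sum_j [c(df)c(e_j)f^{-1}]^2$.

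Next I would compute the pointwise traces on the rank-$4$ spinor bundle. Using the Clifford relations (\ref{a4}) together with $\operatorname{tr}_{S(TM)}[c(X)c(Y)]=-4g^M(X,Y)$, the term $\sum_j c(\nabla^L_{e_j}\mathrm{grad}\,f)c(e_j)$ contracts to a multiple of $\Delta f$; the term $c(df)c(df^{-1})$ yields a multiple of $g(df,df^{-1})$; and the Clifford square $\sum_j [c(df)c(e_j)]^2$ collapses to $-n|df|^2=-4|df|^2$ by anti-commutation and $c(e_j)^2=-1$. Combining these with the scalar curvature contribution $\tfrac{1}{6}s-\tfrac{1}{4}s=-\tfrac{1}{12}s$ gives, after matching coefficients, the integrand $\tfrac{1}{12}s+\tfrac{\Delta f}{2f}+\tfrac{1}{2}g(df,df^{-1})+\tfrac{2|df|^2}{f^2}$ and the overall constant $-128\pi^2$ displayed just before Theorem \ref{th:312}.

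Finally, I would multiply through by the factor from the identity ${\rm Wres}(\widetilde{D}^{-2})=4f^{-2}{\rm Wres}(\overline{D}^{-2})$: this follows because the leading symbol of $\widetilde{D}^2$ differs from that of $\overline{D}^2$ only by the smooth scalar $\tfrac{1}{4}f^2$, and the residue trace is multiplicative under such zeroth-order scalar multiplications. The constant becomes $4\cdot(-128\pi^2)=-512\pi^2$, producing the prefactor $-512\pi^2/f^2$ in the statement. The main obstacle will be the bookkeeping of signs and dimension factors in the Clifford traces, especially because $df^{-1}=-f^{-2}df$ so $g(df,df^{-1})=-|df|^2/f^2$; the coefficients $\tfrac{1}{2}$ in front of $g(df,df^{-1})$ and $2$ in front of $|df|^2/f^2$ have to line up so that no inadvertent cancellation occurs when the trace is collected.
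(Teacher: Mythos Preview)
Your proposal is correct and follows essentially the same route as the paper: the paper also specializes Ackermann's identity to $\widetilde{\Delta}=\overline{D}^2$ with $n=4$, inserts the endomorphism $E$ from Theorem~\ref{th:311}, evaluates the Clifford traces on the rank-$4$ spinor bundle to reach the constant $-128\pi^{2}$, and then passes to $\widetilde{D}^{-2}$ via the factorization $\widetilde{D}^{2}=\tfrac{1}{4}\overline{D}^{2}f^{2}$ and the identity ${\rm Wres}(\widetilde{D}^{-2})=4f^{-2}{\rm Wres}(\overline{D}^{-2})$. Your remark that multiplication by the zeroth-order symbol $4f^{-2}$ simply scales $\sigma_{-4}$ (since $\partial_\xi^\alpha(4f^{-2})=0$ for $|\alpha|\ge 1$) is in fact a slight sharpening of what the paper writes at that step.
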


\section{A Kastler-Kalau-Walze Type Theorem for Perturbations of
Dirac Operators on Manifolds with Boundary}
Let $M$ be a $4$-dimensional compact spin manifolds with the boundary $\partial M$.
In the following, we will compute the case $\widetilde{{\rm Wres}}[\pi^{+}\widetilde{D}^{-1} \circ\pi^{+}\widetilde{D}^{-1}]$.
An application of (\ref{a16}) and (\ref{a17}) shows that
\begin{eqnarray}\label{b1}
\widetilde{{\rm Wres}}[\pi^{+}\widetilde{D}^{-1} \circ\pi^{+}\widetilde{D}^{-1}]=\int_{M}\int_{|\xi|=1}{{\rm trace}}_{S(TM)}\big[\sigma_{-4}\big( \widetilde{D}^{-2}\big)\big]\sigma(\xi)dx+\int_{\partial M}\Phi,
\end{eqnarray}
where
 \begin{eqnarray}\label{b2}
\Phi&=&\int_{|\xi'|=1}\int_{-\infty}^{+\infty}\sum_{j,k=0}^{\infty}\sum \frac{(-i)^{|\alpha|+j+k+\ell}}{\alpha!(j+k+1)!}
{{\rm trace}}_{S(TM)}\Big[\partial_{x_{n}}^{j}\partial_{\xi'}^{\alpha}\partial_{\xi_{n}}^{k}\sigma_{r}^{+}
(\widetilde{D}^{-1})(x',0,\xi',\xi_{n})\nonumber\\
&&\times\partial_{x'}^{\alpha}\partial_{\xi_{n}}^{j+1}\partial_{x_{n}}^{k}\sigma_{l}
\Big(\widetilde{D}^{-1}\Big)(x',0,\xi',\xi_{n})\Big]
d\xi_{n}\sigma(\xi')dx' ,
\end{eqnarray}
the sum is taken over $r-k+|\alpha|+\ell-j-1=-n=-4,r\leq-1,\ell\leq-1$
and $\sigma^+_r(\widetilde{D}^{-1})=\pi^{+}_{\xi_n}\sigma^+_r(\widetilde{D}^{-1})$.

Locally we can use Theorem \ref{th:312} to compute the interior of $\widetilde{{\rm Wres}}[\pi^{+}\widetilde{D}^{-1} \circ\pi^{+}\widetilde{D}^{-1}]$, we have
\begin{align}\label{a19}
&\int_{M}\int_{|\xi|=1}{{\rm trace}}_{S(TM)}\big[\sigma_{-4}\big( \widetilde{D}^{-2}\big)\big]\sigma(\xi)dx\nonumber\\
=&
\int_{M}\frac{-512\pi^{2}}{f^{2}}\bigg\{\frac{1}{12}s+\frac{\Delta f}{2f}
+\frac{1}{2}g(df,df^{-1})+\frac{2|df|^{2}}{f^{2}}\bigg\}d{\rm vol_{M} }.
\end{align}
So we only need to compute $\int_{\partial M}\Phi$.

By Lemma 1 in \cite{Wa4} and Lemma 2.1 in \cite{Wa3}, for any fixed point $x_0\in\partial M$, choosing the normal coordinates $U$
 of $x_0$ in $\partial M$ (not in $M$). Denote by $\sigma_{l}(P)$ the $l$-order symbol of an operator $P$. By the composition formula and (2.2.11) in \cite{Wa3}, we can prove the following useful result.

\begin{lem}\label{le:4.1}
Let $D$ be the Dirac operators on $\Gamma(S(TM))$ and $\widetilde{D}$ be the perturbations of Dirac operators on  $\Gamma(S(TM))$, the symbolic calculus of pseudo-differential operators yields
\begin{eqnarray}\label{z:1.1}
\sigma_{-1}(D^{-1})&=&\frac{ic(\xi)}{|\xi|^2}; \nonumber\\
\sigma_{-1}(\widetilde{D})^{-1}&=&2f^{-1}\sigma_{-1}(D^{-1})=\frac{2ic(\xi)}{f|\xi|^2}; \nonumber\\
\sigma_{-2}(D)^{-1}&=&\frac{c(\xi)
\sigma_{0}(D)c(\xi)}{|\xi|^4}+\frac{c(\xi)}{|\xi|^6}\sum_jc(dx_j)
\Big[\partial_{x_j}[c(\xi)]|\xi|^2-c(\xi)\partial_{x_j}(|\xi|^2)\Big];\nonumber\\
\sigma_{-2}(\widetilde{D})^{-1}&=&\frac{4c(\xi)\sigma_{0}(\widetilde{D})c(\xi)}{f^{2}|\xi|^4}
+\frac{c(\xi)}{|\xi|^6}\sum_jc(dx_j)\Big[2\partial_{x_j}(f^{-1})c(\xi)|\xi|^2+2f^{-1} \partial_{x_j}[c(\xi)]|\xi|^2\nonumber\\
&&-2f^{-1}c(\xi)\partial_{x_j}(|\xi|^2)\Big]\nonumber\\
&=&\frac{2}{f}\sigma_{-2}(D)^{-1}+\frac{4}{f^{2}}
\cdot\frac{c(\xi)c(df)c(\xi)}{|\xi|^4}
+\frac{c(\xi)\sum_jc(dx_j)\cdot2\partial_{x_j}(f^{-1})c(\xi)}{|\xi|^4};
\end{eqnarray}
where
\begin{align}
\sigma_0(D)&=-\frac{1}{4}\sum_{l,s,t}\omega_{s,t}(e_l)c(e_{l})c(e_s)c(e_t);\nonumber\\
\sigma_0(\widetilde{D})&=\bigg[-\frac{1}{4}\sum_{l,s,t}\omega_{s,t}(e_l)c(e_{l})c(e_s)c(e_t)
\bigg]\frac{f}{2}+c(df)=\frac{1}{2}\sigma_0(D)f+c(df).
\end{align}
\end{lem}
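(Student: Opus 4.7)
The plan is to derive all four symbol identities from the standard asymptotic formula for the symbol of the parametrix of an elliptic classical pseudodifferential operator of order one. If $P$ has total symbol $\sigma(P)\sim p_{1}+p_{0}$, then the symbol $q\sim q_{-1}+q_{-2}+\cdots$ of $P^{-1}$ is determined inductively by $\sigma(P)\circ\sigma(P^{-1})=1$, giving
\begin{equation*}
q_{-1}=p_{1}^{-1},\qquad q_{-2}=-q_{-1}\Big[p_{0}q_{-1}-i\sum_{j}(\partial_{\xi_{j}}p_{1})(\partial_{x_{j}}q_{-1})\Big].
\end{equation*}
So the entire proof is a matter of reading off $p_{1},p_{0}$ for $D$ and $\widetilde{D}$ and then running this recursion.

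First, I would extract the principal and subprincipal symbols. From (\ref{a5}) the operator $D=\sum_{i}c(e_{i})[e_{i}-\tfrac{1}{4}\sum_{s,t}\omega_{s,t}(e_{i})c(e_{s})c(e_{t})]$ has $\sigma_{1}(D)=ic(\xi)$ and the announced $\sigma_{0}(D)=-\tfrac{1}{4}\sum_{l,s,t}\omega_{s,t}(e_{l})c(e_{l})c(e_{s})c(e_{t})$. Since $\widetilde{D}=\tfrac{1}{2}fD+c(df)$ with $f$ a scalar function and $c(df)$ of order zero, linearity of the symbol map gives $\sigma_{1}(\widetilde{D})=\tfrac{if}{2}c(\xi)$ and $\sigma_{0}(\widetilde{D})=\tfrac{f}{2}\sigma_{0}(D)+c(df)$, which is the last identity of the lemma.

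Next, using the Clifford relation $c(\xi)^{2}=-|\xi|^{2}$, I invert the leading symbols: $(ic(\xi))^{-1}=ic(\xi)/|\xi|^{2}$ gives $\sigma_{-1}(D^{-1})$, and $(\tfrac{if}{2}c(\xi))^{-1}=2ic(\xi)/(f|\xi|^{2})$ gives $\sigma_{-1}(\widetilde{D}^{-1})=2f^{-1}\sigma_{-1}(D^{-1})$. For $\sigma_{-2}(D^{-1})$ I substitute $p_{1}=ic(\xi)$, $p_{0}=\sigma_{0}(D)$, $q_{-1}=ic(\xi)/|\xi|^{2}$ into the recursion; the $-q_{-1}p_{0}q_{-1}$ piece yields $c(\xi)\sigma_{0}(D)c(\xi)/|\xi|^{4}$ and the derivative sum, after expanding $\partial_{x_{j}}(c(\xi)/|\xi|^{2})$, reproduces the second bracketed term in the stated formula.

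For $\sigma_{-2}(\widetilde{D}^{-1})$ the recursion is the same but $q_{-1}=2ic(\xi)/(f|\xi|^{2})$ depends on $f$, so $\partial_{x_{j}}q_{-1}$ splits into three contributions: $\partial_{x_{j}}[c(\xi)]$, $\partial_{x_{j}}(f^{-1})$, and $\partial_{x_{j}}|\xi|^{2}$. The main obstacle is the bookkeeping of these pieces. The approach is to add and subtract so that the pieces from $\tfrac{f}{2}\sigma_{0}(D)\cdot q_{-1}$ and from the $\partial_{x_{j}}[c(\xi)]$, $\partial_{x_{j}}|\xi|^{2}$ terms reassemble into exactly $\tfrac{2}{f}\sigma_{-2}(D^{-1})$ (the factor $\tfrac{f}{2}\cdot\tfrac{2}{f}=1$ is what makes this cancellation work), while the remaining pieces are precisely the two correction terms $\tfrac{4}{f^{2}}\cdot c(\xi)c(df)c(\xi)/|\xi|^{4}$ coming from the $c(df)$ summand of $\sigma_{0}(\widetilde{D})$, and $c(\xi)\sum_{j}c(dx_{j})\cdot 2\partial_{x_{j}}(f^{-1})c(\xi)/|\xi|^{4}$ coming from differentiation of $f^{-1}$. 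Tracking the $\pm i$ factors and the denominators $|\xi|^{2}$ versus $|\xi|^{4}$ is the only genuine care needed; once the three groups of terms are separated in this way, the stated decomposition of $\sigma_{-2}(\widetilde{D}^{-1})$ falls out immediately.
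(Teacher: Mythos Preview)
Your proposal is correct and follows exactly the approach the paper indicates: the paper states only that the lemma follows ``by the composition formula and (2.2.11) in \cite{Wa3}'' without spelling out the computation, and your argument is precisely the explicit execution of that composition/parametrix recursion, including the regrouping of the $\widetilde{D}$ terms into $\tfrac{2}{f}\sigma_{-2}(D^{-1})$ plus the two $f$-correction pieces.
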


From the formula (\ref{b2}) for the definition of $\Phi$, now we can compute $\Phi$.
Since the sum is taken over $r+\ell-k-j-|\alpha|-1=-4, \ r\leq-1, \ell\leq -1$, then we have the $\int_{\partial{M}}\Phi$
is the sum of the following five cases:
~\\
~\\
\noindent  {\bf case (a)~(I)}~$r=-1, l=-1, j=k=0, |\alpha|=1$.
\\
~\\
By (\ref{b2}), we get
 \begin{eqnarray}
&&{\rm case~(a)~(I)}\nonumber\\
&=&-\int_{|\xi'|=1}\int^{+\infty}_{-\infty}\sum_{|\alpha|=1}{\rm trace}
\Big[\partial^{\alpha}_{\xi'}\pi^{+}_{\xi_{n}}\sigma_{-1}(\widetilde{D}^{-1})
      \times\partial^{\alpha}_{x'}\partial_{\xi_{n}}\sigma_{-1}
      \big(\widetilde{D}^{-1}\big)\Big](x_0)d\xi_n\sigma(\xi')dx'\nonumber\\
      &=&-\int_{|\xi'|=1}\int^{+\infty}_{-\infty}\sum_{|\alpha|=1}{\rm trace}
\Big[\partial^{\alpha}_{\xi'}\pi^{+}_{\xi_{n}}\big(2f^{-1}\sigma_{-1}(D^{-1})\big)
      \times\partial^{\alpha}_{x'}\partial_{\xi_{n}}\big(2f^{-1}\sigma_{-1}(D^{-1})\big)\Big](x_0)
      d\xi_n\sigma(\xi')dx'\nonumber\\
      &=&-2f^{-1}\times 2f^{-1}\int_{|\xi'|=1}\int^{+\infty}_{-\infty}\sum_{|\alpha|=1}{\rm trace}
\Big[\partial^{\alpha}_{\xi'}\pi^{+}_{\xi_{n}}\sigma_{-1}(D^{-1})
      \times\partial^{\alpha}_{x'}\partial_{\xi_{n}}\sigma_{-1}(D^{-1})\Big]
      (x_0)d\xi_n\sigma(\xi')dx'\nonumber\\
      &&-2f^{-1}\sum\limits_{j<n}\partial_{j}(2f^{-1})
      \int_{|\xi'|=1}\int^{+\infty}_{-\infty}\sum_{|\alpha|=1}{\rm trace}
\Big[\partial^{\alpha}_{\xi'}\pi^{+}_{\xi_{n}}\sigma_{-1}(D^{-1})
      \times\partial_{\xi_{n}}\sigma_{-1}(D^{-1})\Big](x_0)\nonumber\\
      &&\times d\xi_n\sigma(\xi')dx'.
\end{eqnarray}
By Lemma \ref{le:3112}, for $i<n$, we have
 \begin{align}\label{b3}
\partial_{x_i}\left(\frac{ic(\xi)}{|\xi|^2}\right)(x_0)=
\frac{i\partial_{x_i}[c(\xi)](x_0)}{|\xi|^2}
-\frac{ic(\xi)\partial_{x_i}(|\xi|^2)(x_0)}{|\xi|^4}=0.
\end{align}
Thus we have
 \begin{align}\label{b3}
-2f^{-1}\times 2f^{-1}\int_{|\xi'|=1}\int^{+\infty}_{-\infty}\sum_{|\alpha|=1}{\rm trace}
\Big[\partial^{\alpha}_{\xi'}\pi^{+}_{\xi_{n}}\sigma_{-1}(D^{-1})
      \times\partial^{\alpha}_{x'}\partial_{\xi_{n}}\sigma_{-1}(D^{-1})\Big]
      (x_0)d\xi_n\sigma(\xi')dx'=0.
\end{align}

By (\ref{z:1.1}) and direct calculations, for $i<n$, we obtain
\begin{eqnarray}
&&\partial^{\alpha}_{\xi'}\pi^{+}_{\xi_{n}}\sigma_{-1}(D^{-1})(x_0)|_{|\xi'|=1}
=\partial_{\xi_i}\pi^{+}_{\xi_{n}}\sigma_{-1}(D^{-1})(x_0)|_{|\xi'|=1}\nonumber\\
&=&\frac{c(dx_i)}{2(\xi_n-\sqrt{-1})}-\frac{\xi_i(\xi_n-2\sqrt{-1})c(\xi')
+\xi_ic(dx_n)}{2(\xi_n-\sqrt{-1})^2}
\end{eqnarray}
and
\begin{align}\label{z:1.2}
\partial_{\xi_n}\sigma_{-1}(D)^{-1}|_{|\xi'|=1}(x_0)
=i\bigg[\frac{1-\xi^2_n}{(1+\xi^2_n)^2}c(dx_n)
-\frac{2\xi_n}{(1+\xi^2_n)^2}c(\xi')\bigg].
\end{align}

Then for $i<n$, we have
\begin{eqnarray}
&&{\rm trace}
\Big[\partial^{\alpha}_{\xi'}\pi^{+}_{\xi_{n}}\sigma_{-1}(D^{-1})
      \times\partial_{\xi_{n}}\sigma_{-1}(D^{-1})\Big](x_0)\nonumber\\
&=&-\frac{2i\xi_i\xi_n}{2(\xi_n-i)(1+\xi^2_n)^2}{\rm trace}[c(dx_i)^2]
+\frac{2i\xi_n\xi_i(\xi_n-2i)}{2(\xi_n-i)^2(1+\xi^2_n)^2}{\rm trace}[c(\xi')^2]\nonumber\\
&&-\frac{i\xi_i(1-\xi^2_n)}{2(\xi_n-i)^2(1+\xi^2_n)^2}{\rm trace}[c(dx_n)^2].
\end{eqnarray}
We note that $i<n,~\int_{|\xi'|=1}\xi_i\sigma(\xi')=0$,
so
\begin{eqnarray}
&&-2f^{-1}\sum\limits_{j<n}\partial_{j}(2f^{-1})
      \int_{|\xi'|=1}\int^{+\infty}_{-\infty}\sum_{|\alpha|=1}{\rm trace}
\Big[\partial^{\alpha}_{\xi'}\pi^{+}_{\xi_{n}}\sigma_{-1}(D^{-1})
      \times\partial_{\xi_{n}}\sigma_{-1}(D^{-1})\Big](x_0) d\xi_n\sigma(\xi')dx'\nonumber\\
&=&0.
\end{eqnarray}
Then we have ${\bf case~(a)~(I)}=0$.
~\\

\noindent  {\bf case (a)~(II)}~$r=-1,~l=-1,~k=|\alpha|=0,~j=1$.\\

\noindent By (\ref{b2}), we get
\begin{align}\label{b4}
&{\rm case~(a)~(II)}\nonumber\\
=&-\frac{1}{2}\int_{|\xi'|=1}\int^{+\infty}_{-\infty} {\rm
trace} [\partial_{x_n}\pi^+_{\xi_n}\sigma_{-1}(\widetilde{D}^{-1})\times
\partial_{\xi_n}^2\sigma_{-1}(\widetilde{D}^{-1})](x_0)d\xi_n\sigma(\xi')dx'\nonumber\\
=&-\frac{1}{2}\int_{|\xi'|=1}\int^{+\infty}_{-\infty} {\rm
trace} [\partial_{x_n}\pi^+_{\xi_n}\big(2f^{-1}\sigma_{-1}(D^{-1})\big)\times
\partial_{\xi_n}^2\big(2f^{-1}\sigma_{-1}(D^{-1})\big)](x_0)d\xi_n\sigma(\xi')dx'\nonumber\\
=&-\frac{1}{2}\times2f^{-1}\times2f^{-1}
\int_{|\xi'|=1}\int^{+\infty}_{-\infty}{\rm trace}
\Big[\partial_{x_{n}}\pi^{+}_{\xi_{n}}\sigma_{-1}(D^{-1})
      \times\partial^2_{\xi_{n}}\sigma_{-1}(D^{-1})\Big](x_0)d\xi_n\sigma(\xi')dx'\nonumber\\
      &-\frac{1}{2}\times 2f^{-1}\partial_{x_{n}}(2f^{-1})
      \int_{|\xi'|=1}\int^{+\infty}_{-\infty}{\rm trace}
\Big[\pi^{+}_{\xi_{n}}\sigma_{-1}(D^{-1})
      \times\partial^2_{\xi_{n}}\sigma_{-1}(D^{-1})\Big](x_0)d\xi_n\sigma(\xi')dx'.
\end{align}
\noindent By Lemma \ref{le:4.1}, we have
\begin{align}\label{b5}
\partial^2_{\xi_n}\sigma_{-1}(D^{-1})(x_0)
=i\left(-\frac{6\xi_nc(dx_n)+2c(\xi')}
{|\xi|^4}+\frac{8\xi_n^2c(\xi)}{|\xi|^6}\right);
\end{align}
\begin{align}\label{b6}
\partial_{x_n}\sigma_{-1}(D^{-1})(x_0)
=\frac{i\partial_{x_n}c(\xi')(x_0)}{|\xi|^2}
-\frac{ic(\xi)|\xi'|^2h'(0)}{|\xi|^4}.
\end{align}
By (2.2.23) in \cite{Wa3} and (3.12), we have
\begin{equation}
\pi^{+}_{\xi_{n}}\partial_{x_{n}}\sigma_{-1}(D^{-1})(x_{0})|_{|\xi'|=1}
=\frac{\partial_{x_{n}}[c(\xi')](x_{0})}{2(\xi_{n}-\sqrt{-1})}
+\sqrt{-1}h'(0)\bigg[\frac{\sqrt{-1}c(\xi')}{4(\xi_{n}-\sqrt{-1})}
+\frac{c(\xi')+\sqrt{-1}c(dx_{n})}{4(\xi_{n}-\sqrt{-1})^{2}}\bigg].
\end{equation}

\noindent By the relation of the Clifford action and ${\rm trace}{AB}={\rm trace}{BA}$, we have the equalities:
\begin{align}\label{32}
&{\rm trace}[c(\xi')c(dx_n)]=0;~~{\rm trace}[c(dx_n)^2]=-4;~~{\rm trace}[c(\xi')^2](x_0)|_{|\xi'|=1}=-4;\nonumber\\
&{\rm trace}[\partial_{x_n}c(\xi')c(dx_n)]=0;~~{\rm trace}[\partial_{x_n}c(\xi')c(\xi')](x_0)|_{|\xi'|=1}=-2h'(0).
\end{align}

Then
\begin{align}\label{35}
&-\frac{1}{2}\times2f^{-1}\times2f^{-1}
\int_{|\xi'|=1}\int^{+\infty}_{-\infty}{\rm trace}
\Big[\partial_{x_{n}}\pi^{+}_{\xi_{n}}\sigma_{-1}(D^{-1})
      \times\partial^2_{\xi_{n}}\sigma_{-1}(D^{-1})\Big](x_0)d\xi_n\sigma(\xi')dx'\nonumber\\
=&-\int_{|\xi'|=1}\int^{+\infty}_{-\infty}\frac{4ih'(0)(\xi_n-i)^2}
{f^2(\xi_n-i)^4(\xi_n+i)^3}d\xi_n\sigma(\xi')dx'\nonumber\\
=&-\frac{4 ih'(0)\Omega_3}{f^2}\int_{\Gamma^+}\frac{1}{(\xi_n-i)^2(\xi_n+i)^3}d\xi_ndx'\nonumber\\
=&-\frac{4 ih'(0)\Omega_32\pi i}{f^2}\bigg[\frac{1}{(\xi_n+i)^3}\bigg]^{(1)}\bigg|_{\xi_n=i}dx'\nonumber\\
=&-\frac{3}{8}\times\frac{4}{f^2}\pi h'(0)\Omega_3dx'\nonumber\\
=&-\frac{3}{2f^2}\pi h'(0)\Omega_3dx',
\end{align}
where ${\rm \Omega_{3}}$ is the canonical volume of 3-dimensional unit
sphere $S^{2}$.\\

On the other hand, by calculations, we have
\begin{equation}
\pi^{+}_{\xi_{n}}\sigma_{-1}(D^{-1})(x_{0})|_{|\xi'|=1}
=-\frac{c(\xi')+\sqrt{-1}c(dx_{n})}{2(\xi_{n}-\sqrt{-1})},
\end{equation}
then
\begin{align}\label{35}
&-\frac{1}{2}\times 2f^{-1}\partial_{x_{n}}(2f^{-1})
      \int_{|\xi'|=1}\int^{+\infty}_{-\infty}{\rm trace}
\Big[\pi^{+}_{\xi_{n}}\sigma_{-1}(D^{-1})
      \times\partial^2_{\xi_{n}}\sigma_{-1}(D^{-1})\Big](x_0)d\xi_n\sigma(\xi')dx'\nonumber\\
=&-\frac{1}{2}\times 2f^{-1}\partial_{x_{n}}(2f^{-1})
      \int_{|\xi'|=1}\int^{+\infty}_{-\infty}\frac{4i(3\xi^2_n-1)+4(3\xi_n-\xi^3_n)}
{(\xi_n-i)^4(\xi_n+i)^3}d\xi_n\sigma(\xi')dx'\nonumber\\
=&-\frac{1}{2}\times 2f^{-1}\partial_{x_{n}}(2f^{-1})\Omega_3\int_{\Gamma^+}
\frac{4i(3\xi^2_n-1)+4(3\xi_n-\xi^3_n)}
{(\xi_n-i)^4(\xi_n+i)^3}d\xi_ndx'\nonumber\\
=&-\frac{1}{2}\times 2f^{-1}\partial_{x_{n}}(2f^{-1})\Omega_3
\bigg[\frac{4i(3\xi^2_n-1)+4(3\xi_n-\xi^3_n)}
{(\xi_n+i)^3}\bigg]^{(3)}\bigg|_{\xi_n=i}dx'\nonumber\\
=&0,
\end{align}

Then we have ${\bf case~(a)~(II)}=-\frac{3}{2f^2}\pi h'(0)\Omega_3dx'$.
~\\

\noindent  {\bf case (a)~(III)}~$r=-1,~l=-1,~j=|\alpha|=0,~k=1$.\\

\noindent By (\ref{b2}), we get
\begin{align}\label{36}
&{\bf case (a)~(III)}\nonumber\\
=&-\frac{1}{2}\int_{|\xi'|=1}\int^{+\infty}_{-\infty}
{\rm trace} [\partial_{\xi_n}\pi^+_{\xi_n}\sigma_{-1}(\widetilde{D})^{-1}\times
\partial_{\xi_n}\partial_{x_n}\sigma_{-1}(\widetilde{D})^{-1}]
(x_0)d\xi_n\sigma(\xi')dx'\nonumber\\
=&-\frac{1}{2}\int_{|\xi'|=1}\int^{+\infty}_{-\infty}
{\rm trace} \big[\partial_{\xi_n}\pi^+_{\xi_n}\big(2f^{-1}\sigma_{-1}(D^{-1})\big)\times
\partial_{\xi_n}\partial_{x_n}\big(2f^{-1}\sigma_{-1}(D^{-1})\big)\big]
(x_0)d\xi_n\sigma(\xi')dx'\nonumber\\
=&-\frac{1}{2}\times 2f^{-1}\times 2f^{-1}
 \int_{|\xi'|=1}\int^{+\infty}_{-\infty}{\rm trace}
\Big[\partial_{\xi_{n}}\pi^{+}_{\xi_{n}}\sigma_{-1}(D^{-1})
      \times\partial_{\xi_{n}}\partial_{x_{n}}\sigma_{-1}
(D^{-1})\Big](x_0)d\xi_n\sigma(\xi')dx'\nonumber\\
      &-\frac{1}{2}\times 2f^{-1}\partial_{x_{n}}(2f^{-1})
      \int_{|\xi'|=1}\int^{+\infty}_{-\infty}{\rm trace}
\Big[\partial_{\xi_{n}}\pi^{+}_{\xi_{n}}\sigma_{-1}(D^{-1})
      \times\partial_{\xi_{n}}\sigma_{-1}
(D^{-1})\Big](x_0) d\xi_n\sigma(\xi')dx'.
\end{align}
By Lemma \ref{le:4.1}, we have
\begin{align}\label{37}
\partial_{\xi_n}\partial_{x_n}\sigma_{-1}(D^{-1})(x_0)|_{|\xi'|=1}
=-ih'(0)\left[\frac{c(dx_n)}{|\xi|^4}-4\xi_n\frac{c(\xi')
+\xi_nc(dx_n)}{|\xi|^6}\right]-\frac{2\xi_ni\partial_{x_n}c(\xi')(x_0)}{|\xi|^4};
\end{align}
\begin{align}\label{38}
\partial_{\xi_n}\pi^+_{\xi_n}\sigma_{-1}(D^{-1})(x_0)|_{|\xi'|=1}
=-\frac{c(\xi')+ic(dx_n)}{2(\xi_n-i)^2}.
\end{align}
By (\ref{32}), we have
\begin{align}\label{39}
{\rm trace}\left\{\frac{c(\xi')+ic(dx_n)}{2(\xi_n-i)^2}\times
ih'(0)\left[\frac{c(dx_n)}{|\xi|^4}-4\xi_n\frac{c(\xi')+\xi_nc(dx_n)}{|\xi|^6}\right]\right\}
=\frac{2h'(0)(i-3\xi_n)}{(\xi_n-i)^4(\xi_n+i)^3}
\end{align}
and
\begin{align}\label{40}
{\rm tr}\left[\frac{c(\xi')+ic(dx_n)}{2(\xi_n-i)^2}\times
\frac{2\xi_ni\partial_{x_n}c(\xi')(x_0)}{|\xi|^4}\right]
=\frac{-2ih'(0)\xi_n}{(\xi_n-i)^4(\xi_n+i)^2}.
\end{align}
So we have
\begin{align}\label{41}
&-\frac{1}{2}\times 2f^{-1}\times 2f^{-1}
 \int_{|\xi'|=1}\int^{+\infty}_{-\infty}{\rm trace}
\Big[\partial_{\xi_{n}}\pi^{+}_{\xi_{n}}\sigma_{-1}(D^{-1})
      \times\partial_{\xi_{n}}\partial_{x_{n}}\sigma_{-1}
(D^{-1})\Big](x_0)d\xi_n\sigma(\xi')dx'\nonumber\\
=&-\int_{|\xi'|=1}\int^{+\infty}_{-\infty}\frac{4h'(0)(i-3\xi_n)}
{f^2(\xi_n-i)^4(\xi_n+i)^3}d\xi_n\sigma(\xi')dx'
-\int_{|\xi'|=1}\int^{+\infty}_{-\infty}\frac{4h'(0)i\xi_n}
{f^2(\xi_n-i)^4(\xi_n+i)^2}d\xi_n\sigma(\xi')dx'\nonumber\\
=&-h'(0)\Omega_3\frac{8\pi i}{3!f^2}\left[\frac{(i-3\xi_n)}{(\xi_n+i)^3}\right]^{(3)}
\bigg|_{\xi_n=i}dx'+h'(0)\Omega_3\frac{8\pi i}{3!f^2}\left[\frac{i\xi_n}{(\xi_n+i)^2}\right]^{(3)}\bigg|_{\xi_n=i}dx'\nonumber\\
=&\frac{4}{f^2}\times\frac{3}{8}\pi h'(0)\Omega_3dx'\nonumber\\
=&\frac{3}{2f^2}\pi h'(0)\Omega_3dx'.
\end{align}

On the other hand, we have
\begin{align}\label{z:1.2}
\partial_{\xi_n}\sigma_{-1}(D)^{-1}|_{|\xi'|=1}(x_0)
=i\bigg[\frac{1-\xi^2_n}{(1+\xi^2_n)^2}c(dx_n)
-\frac{2\xi_n}{(1+\xi^2_n)^2}c(\xi')\bigg],
\end{align}
then we have
\begin{align}\label{za:1.2}
&-\frac{1}{2}\times 2f^{-1}\partial_{x_{n}}(2f^{-1})
      \int_{|\xi'|=1}\int^{+\infty}_{-\infty}{\rm trace}
\Big[\partial_{\xi_{n}}\pi^{+}_{\xi_{n}}\sigma_{-1}(D^{-1})
      \times\partial_{\xi_{n}}\sigma_{-1}
(D^{-1})\Big](x_0) d\xi_n\sigma(\xi')dx'\nonumber\\
=&-\frac{1}{2}\times 2f^{-1}\partial_{x_{n}}(2f^{-1})
      \int_{|\xi'|=1}\int^{+\infty}_{-\infty}
\Big[\frac{2(\xi^2_n-1)-2\xi_n}{(\xi_n-i)^4(\xi_n+i)^2}\Big]
(x_0) d\xi_n\sigma(\xi')dx'\nonumber\\
=&-\frac{1}{2}\times 2f^{-1}\partial_{x_{n}}(2f^{-1})\Omega_3\int_{\Gamma^+}
\Big[\frac{2(\xi^2_n-1)-2\xi_n}{(\xi_n-i)^4(\xi_n+i)^2}\Big]d\xi_ndx'\nonumber\\
=&-\frac{1}{2}\times 2f^{-1}\partial_{x_{n}}(2f^{-1})\Omega_3
\bigg[\frac{2(\xi^2_n-1)-2\xi_n}{(\xi_n+i)^2}\bigg]^{(3)}\bigg|_{\xi_n=i}dx'\nonumber\\
=&-\frac{1}{2}\times 2f^{-1}\partial_{x_{n}}(2f^{-1})\Omega_3\times \big(\frac{3}{2}-\frac{9i}{4}\big)dx'\nonumber\\
=&\frac{9i-6}{2} f^{-1}\partial_{x_{n}}(f^{-1})\Omega_3dx'.
\end{align}

Then we have ${\bf case~(a)~(III)}=\frac{3}{2f^2}\pi h'(0)\Omega_3dx'+\frac{9i-6}{2} f^{-1}\partial_{x_{n}}(f^{-1})\Omega_3dx'$.
~\\

\noindent  {\bf case (b)}~$r=-2,~l=-1,~k=j=|\alpha|=0$.\\

\noindent By (\ref{b2}), we get
\begin{align}\label{42}
{\bf case~(b)}
=&-i\int_{|\xi'|=1}\int^{+\infty}_{-\infty}{\rm trace} [\pi^+_{\xi_n}\sigma_{-2}(\widetilde{D})^{-1}\times
\partial_{\xi_n}\sigma_{-1}(\widetilde{D})^{-1}](x_0)d\xi_n\sigma(\xi')dx'\nonumber\\
=&-i\int_{|\xi'|=1}\int^{+\infty}_{-\infty}{\rm tr} \bigg[\pi^+_{\xi_n}\big[\frac{2}{f}\sigma_{-2}(D^{-1})\big]\times
\partial_{\xi_n}\sigma_{-1}(\widetilde{D})^{-1}\bigg](x_0)d\xi_n\sigma(\xi')dx'\nonumber\\
&-i\int_{|\xi'|=1}\int^{+\infty}_{-\infty}{\rm trace} \bigg[\pi^+_{\xi_n}\bigg(\frac{4}{f^{2}}\cdot\frac{c(\xi)c(df)c(\xi)}{|\xi|^4}\bigg)\times
\partial_{\xi_n}\sigma_{-1}(\widetilde{D})^{-1}\bigg](x_0)d\xi_n\sigma(\xi')dx'\nonumber\\
&-i\int_{|\xi'|=1}\int^{+\infty}_{-\infty}{\rm trace} \bigg[\pi^+_{\xi_n}\bigg(\frac{c(\xi)\sum_jc(dx_j)\cdot
2\partial_{x_j}(f^{-1})c(\xi)}{|\xi|^4}\bigg)\times
\partial_{\xi_n}\sigma_{-1}(\widetilde{D})^{-1}\bigg](x_0)\nonumber\\
&\times d\xi_n\sigma(\xi')dx'.
\end{align}
By {\bf case b} in \cite{Wa3}, we have
\begin{align}\label{90}
&-i\int_{|\xi'|=1}\int^{+\infty}_{-\infty}{\rm trace} \bigg[\pi^+_{\xi_n}\big[\frac{2}{f}\sigma_{-2}(D^{-1})\big]\times
\partial_{\xi_n}\sigma_{-1}(\widetilde{D})^{-1}\bigg](x_0)d\xi_n\sigma(\xi')dx'\nonumber\\
=&\frac{4}{f^{2}}\times\frac{9}{8}\pi h'(0)\Omega_3dx'\nonumber\\
=&\frac{9}{2f^{2}}\pi h'(0)\Omega_3dx'.
\end{align}
Moreover,
\begin{align}\label{91}
&\pi^+_{\xi_n}\bigg(\frac{c(\xi)c(df)c(\xi)}{|\xi|^4}\bigg)(x_0)|_{|\xi'|=1}\nonumber\\
=&\pi^+_{\xi_n}\bigg[\frac{[c(\xi')+\xi_nc(dx_n)]c(df)[c(\xi')+\xi_nc(dx_n)]}{(1+\xi^2_n)^2}\bigg]\nonumber\\
=&\frac{1}{2\pi i}\int_{\Gamma^+}\bigg[\frac{c(\xi')c(df)c(\xi')+c(dx_n)c(df)c(\xi')\eta_n
+c(\xi')c(df)c(dx_n)\eta_n+c(dx_n)c(df)c(dx_n)\eta^{2}_n}{(\eta_n+i)^{2}(\xi_n-\eta_n)(\eta_n-i)^{2}}\bigg]d\eta_n\nonumber\\
=&\bigg[\frac{c(\xi')c(df)c(\xi')+c(dx_n)c(df)c(\xi')\eta_n
+c(\xi')c(df)c(dx_n)\eta_n+c(dx_n)c(df)c(dx_n)\eta^{2}_n}{(\eta_n+i)^{2}
(\xi_n-\eta_n)}\bigg]'|_{\eta_n=i}\nonumber\\
=&-\frac{i\xi_n+2}{4(\xi_n-i)^2}c(\xi')c(df)c(\xi')
-\frac{i}{4(\xi_n-i)^2}\bigg[c(dx_n)c(df)c(\xi')+c(\xi')c(df)c(dx_n)\bigg]\nonumber\\
&-\frac{i\xi_n}{4(\xi_n-i)^2}c(dx_n)c(df)c(dx_n)
\end{align}
and
\begin{align}\label{92}
\partial_{\xi_n}\sigma_{-1}(\widetilde{D})^{-1}|_{|\xi'|=1}(x_0)
=\frac{2i}{f}\bigg[\frac{1-\xi^2_n}{(1+\xi^2_n)^2}c(dx_n)
-\frac{2\xi_n}{(1+\xi^2_n)^2}c(\xi')\bigg].
\end{align}
By (\ref{92}) and
$$c(\xi')^2|_{|\xi'|=1}=-1;~c(dx_n)^2=-1;~c(\xi')c(dx_n)=-c(dx_n)c(\xi');~{\rm trace}~AB={\rm trace}~BA,$$
we get
\begin{align}\label{93}
&{\rm trace} \bigg[\pi^+_{\xi_n}\bigg(\frac{c(\xi)c(df)c(\xi)}{|\xi|^4}\bigg)\times
\partial_{\xi_n}\sigma_{-1}(\widetilde{D})^{-1}\bigg](x_0)|_{|\xi'|=1}\nonumber\\
=&\frac{i}{f(1+\xi^2_n)^2}{\rm trace} [c(dx_n)c(df)]+\frac{1}{f(1+\xi^2_n)^2}{\rm trace} [c(\xi')c(df)].
\end{align}
Considering, for $i<n$,
\begin{align}\label{94}
&-i\int_{|\xi'|=1}\int^{+\infty}_{-\infty}{\rm trace}
\bigg[\pi^+_{\xi_n}\bigg(\frac{4}{f^{2}}\cdot\frac{c(\xi)c(df)c(\xi)}{|\xi|^4}\bigg)\times
\partial_{\xi_n}\sigma_{-1}(\widetilde{D})^{-1}\bigg](x_0)|_{|\xi'|=1}d\xi_n\sigma(\xi')dx'\nonumber\\
=&\frac{4}{f^{2}}\cdot\frac{\pi}{4}\times\frac{2}{f}{\rm trace}[c(dx_n)c(df)]\Omega_3dx'\nonumber\\
=&\frac{2\pi}{f^{3}}{\rm trace}[c(dx_n)c(df)]\Omega_3dx'.
\end{align}
On the other hand, we have
\begin{align}\label{91}
&\pi^+_{\xi_n}\bigg(\frac{c(\xi)\sum_jc(dx_j)\cdot
2\partial_{x_j}(f^{-1})c(\xi)}{|\xi|^4}\bigg)(x_0)|_{|\xi'|=1}\nonumber\\
=&\pi^+_{\xi_n}\bigg[\frac{[c(\xi')+\xi_nc(dx_n)]\sum_jc(dx_j)\cdot
2\partial_{x_j}(f^{-1})[c(\xi')+\xi_nc(dx_n)]}{(1+\xi^2_n)^2}\bigg]\nonumber\\
=&\frac{1}{2\pi i}\int_{\Gamma^+}\Bigg\{
\bigg[
c(\xi')\sum_jc(dx_j)\cdot
2\partial_{x_j}(f^{-1})c(\xi')+c(dx_n)\sum_jc(dx_j)\cdot
2\partial_{x_j}(f^{-1})c(\xi')\eta_n\nonumber\\
&+c(\xi')\sum_jc(dx_j)\cdot
2\partial_{x_j}(f^{-1})c(dx_n)\eta_n+c(dx_n)\sum_jc(dx_j)\cdot
2\partial_{x_j}(f^{-1})c(dx_n)\eta^{2}_n\bigg]\nonumber\\
&\times\bigg[(\eta_n+i)^{-2}
(\xi_n-\eta_n)^{-1}(\eta_n-i)^{-2}\bigg]\Bigg\}
d\eta_n\nonumber\\
=&\Bigg\{\bigg[c(\xi')\sum_jc(dx_j)\cdot
2\partial_{x_j}(f^{-1})c(\xi')+c(dx_n)\sum_jc(dx_j)\cdot
2\partial_{x_j}(f^{-1})c(\xi')\eta_n\nonumber\\
&+c(\xi')\sum_jc(dx_j)\cdot
2\partial_{x_j}(f^{-1})c(dx_n)\eta_n+c(dx_n)\sum_jc(dx_j)\cdot
2\partial_{x_j}(f^{-1})c(dx_n)\eta^{2}_n\bigg]\nonumber\\
&\times\bigg((\eta_n+i)^{-2}
(\xi_n-\eta_n)^{-1}\bigg)\bigg]'
\Bigg\}|_{\eta_n=i}\nonumber\\
=&-\frac{i\xi_n+2}{4(\xi_n-i)^2}c(\xi')\sum_jc(dx_j)\cdot
2\partial_{x_j}(f^{-1})c(\xi')
-\frac{i}{4(\xi_n-i)^2}\bigg[c(dx_n)\sum_jc(dx_j)\cdot
2\partial_{x_j}(f^{-1})c(\xi')\nonumber\\
&+c(\xi')\sum_jc(dx_j)\cdot
2\partial_{x_j}(f^{-1})c(dx_n)\bigg]-\frac{i\xi_n}{4(\xi_n-i)^2}c(dx_n)\sum_jc(dx_j)\cdot
2\partial_{x_j}(f^{-1})c(dx_n),
\end{align}
we get
\begin{align}\label{93}
&{\rm trace} \bigg[\pi^+_{\xi_n}\bigg(\frac{c(\xi)\sum_jc(dx_j)\cdot
2\partial_{x_j}(f^{-1})c(\xi)}{|\xi|^4}\bigg)\times
\partial_{\xi_n}\sigma_{-1}(\widetilde{D})^{-1}\bigg](x_0)|_{|\xi'|=1}\nonumber\\
=&\frac{i}{f(1+\xi^2_n)^2}{\rm trace} [c(dx_n)\sum_jc(dx_j)\cdot
2\partial_{x_j}(f^{-1})]+\frac{1}{f(1+\xi^2_n)^2}{\rm trace} [c(\xi')\sum_jc(dx_j)\cdot
2\partial_{x_j}(f^{-1})].
\end{align}
Considering, for $i<n$,
\begin{align}\label{94}
&-i\int_{|\xi'|=1}\int^{+\infty}_{-\infty}{\rm trace}
\bigg[\pi^+_{\xi_n}\bigg(\frac{c(\xi)c(df)c(\xi)}{|\xi|^4}\bigg)\times
\partial_{\xi_n}\sigma_{-1}(\widetilde{D})^{-1}\bigg](x_0)|_{|\xi'|=1}d\xi_n\sigma(\xi')dx'\nonumber\\
=&\frac{\pi}{4}\times\frac{2}{f}{\rm trace}[c(dx_n)\sum_jc(dx_j)\cdot
2\partial_{x_j}(f^{-1})]\Omega_3dx'-\frac{\pi i}{4}\times\frac{2}{f}{\rm trace}[c(\xi')\sum_jc(dx_j)\cdot
2\partial_{x_j}(f^{-1})]\Omega_3dx'\nonumber\\
=&\frac{\pi}{2f}{\rm trace}[c(dx_n)\sum_jc(dx_j)\cdot
2\partial_{x_j}(f^{-1})]\Omega_3dx'-\frac{\pi i}{2f}{\rm trace}[c(\xi')\sum_jc(dx_j)\cdot
2\partial_{x_j}(f^{-1})]\Omega_3dx'.
\end{align}
Then
\begin{align}\label{94}
{\bf case~(b)}=&\frac{9}{2f^{2}}\pi h'(0)\Omega_3dx'+\frac{2\pi}{f^{3}}{\rm trace}[c(dx_n)c(df)]\Omega_3dx'+\frac{\pi}{2f}{\rm trace}[c(dx_n)\sum_jc(dx_j)\cdot
2\partial_{x_j}(f^{-1})]\Omega_3dx'\nonumber\\
&-\frac{\pi i}{2f}{\rm trace}[c(\xi')\sum_jc(dx_j)\cdot
2\partial_{x_j}(f^{-1})]\Omega_3dx'.
\end{align}

\noindent {\bf  case (c)}~$r=-1,~l=-2,~k=j=|\alpha|=0$.\\

\noindent By (\ref{b2}), we get
\begin{align}\label{61}
{\bf case~(c)}=-i\int_{|\xi'|=1}\int^{+\infty}_{-\infty}{\rm trace} [\pi^+_{\xi_n}\sigma_{-1}(\widetilde{D})^{-1}\times
\partial_{\xi_n}\sigma_{-2}(\widetilde{D})^{-1}](x_0)d\xi_n\sigma(\xi')dx'.
\end{align}
By Lemma \ref{le:4.1}, we have
\begin{align}\label{62}
\pi^+_{\xi_n}\sigma_{-1}(\widetilde{D})^{-1}|_{|\xi'|=1}(x_0)=\frac{c(\xi')+ic(dx_n)}{f(\xi_n-i)}.
\end{align}
By (\ref{z:1.1}), we have
\begin{align}\label{65}
&\partial_{\xi_n}\sigma_{-2}(\widetilde{D})^{-1}(x_0)|_{|\xi'|=1}\nonumber\\
=&\partial_{\xi_n}\bigg\{\frac{2}{f}\sigma_{-2}(D)^{-1}+\frac{4}{f^{2}}
\cdot\frac{c(\xi)c(df)c(\xi)}{|\xi|^4}
+\frac{c(\xi)\sum_jc(dx_j)\cdot2\partial_{x_j}(f^{-1})c(\xi)}{|\xi|^4}\bigg\}
\nonumber\\
=&\partial_{\xi_n}\bigg\{\frac{2}{f}\sigma_{-2}(D)^{-1}\bigg\}+\partial_{\xi_n}\bigg(\frac{4}{f^{2}}
\cdot\frac{c(\xi)c(df)c(\xi)}{|\xi|^4}\bigg)+\partial_{\xi_n}\bigg( \frac{c(\xi)\sum_jc(dx_j)\cdot2\partial_{x_j}(f^{-1})c(\xi)}{|\xi|^4}\bigg).
\end{align}
By computations, we have
\begin{align}\label{67}
&\partial_{\xi_n}\bigg(\frac{c(\xi)c(df)c(\xi)}{|\xi|^4}\bigg)(x_0)\nonumber\\
=&-\frac{4\xi_n}{(1+\xi_n^2)^3}c(\xi')c(df)c(\xi')
+\bigg(\frac{1}{(1+\xi_n^2)^2}-\frac{4\xi_n^2}{(1+\xi_n^2)^3}\bigg)\bigg(c(\xi')c(df)c(dx_n)\nonumber\\
&+c(dx_n)c(df)c(\xi')\bigg)+\bigg(\frac{2\xi_n}{(1+\xi_n^2)^2}
-\frac{4\xi_n^3}{(1+\xi_n^2)^3}\bigg)c(dx_n)c(df)c(dx_n).
\end{align}
We denote $$\sigma_{-2}(D^{-1})=\frac{c(\xi)\sigma_0(D)c(\xi)}{|\xi|^4}+\frac{c(\xi)}{|\xi|^6}c(dx_n)[\partial_{x_n}[c(\xi')](x_0)|\xi|^2-c(\xi)h'(0)],$$ then
\begin{align}\label{68}
&\partial_{\xi_n}\big(\sigma_{-2}(D^{-1})\big)(x_0)\nonumber\\
=&\frac{1}{(1+\xi_n^2)^3}\bigg[(2\xi_n-2\xi_n^3)c(dx_n)\sigma_0(D)(x_0)c(dx_n)
+(1-3\xi_n^2)c(dx_n)\sigma_0(D)(x_0)c(\xi')\nonumber\\
&+(1-3\xi_n^2)c(\xi')\sigma_0(D)(x_0)c(dx_n)
-4\xi_nc(\xi')\sigma_0(D)(x_0)c(\xi')
+(3\xi_n^2-1){\partial}_{x_n}c(\xi')\nonumber\\
&-4\xi_nc(\xi')c(dx_n){\partial}_{x_n}c(\xi')
+2h'(0)c(\xi')+2h'(0)\xi_nc(dx_n)\bigg]+6\xi_nh'(0)\frac{c(\xi)c(dx_n)c(\xi)}{(1+\xi^2_n)^4}.
\end{align}
By (\ref{62}) and (\ref{68}), we have
\begin{align}\label{71}
&{\rm tr}[\pi^+_{\xi_n}\sigma_{-1}(\widetilde{D})^{-1}\times
\partial_{\xi_n}\big(\frac{2}{f}\sigma_{-2}(D^{-1})\big)](x_0)\nonumber\\
=&\frac{4}{f^{2}}\cdot\frac{3h'(0)(i\xi^2_n+\xi_n-2i)}{(\xi-i)^3(\xi+i)^3}
+\frac{4}{f^{2}}\cdot\frac{12h'(0)i\xi_n}{(\xi-i)^3(\xi+i)^4}\nonumber\\
=&\frac{12h'(0)(i\xi^2_n+\xi_n-2i)}{f^{2}(\xi-i)^3(\xi+i)^3}
+\frac{48h'(0)i\xi_n}{f^{2}(\xi-i)^3(\xi+i)^4}.
\end{align}
Then
\begin{align}\label{72}
-i\Omega_3\int_{\Gamma_+}\frac{4}{f^{2}}\cdot\bigg[\frac{3h'(0)(i\xi_n^2+\xi_n-2i)}
{(\xi_n-i)^3(\xi_n+i)^3}+\frac{12h'(0)i\xi_n}{(\xi_n-i)^3(\xi_n+i)^4}\bigg]d\xi_ndx'=
-\frac{9}{2f^{2}}\pi h'(0)\Omega_3dx'.
\end{align}
By (\ref{62}) and (\ref{67}), we have
\begin{align}\label{73}
&{\rm tr}[\pi^+_{\xi_n}\sigma_{-1}(\widetilde{D})^{-1}\times
\partial_{\xi_n}\bigg(\frac{c(\xi)c(df)c(\xi)}
{|\xi|^4}\bigg)](x_0)\nonumber\\
=&\frac{2}{f}\cdot\frac{1-3\xi_n^2+3i\xi_n-i\xi_n^3}{(\xi_n-i)^4(\xi_n+i)^3}
{\rm trace}[c(dx_n)c(df)]+\frac{2}{f}\cdot\frac{-i(1-3\xi_n^2)-3\xi_n+\xi_n^3}{(\xi_n-i)^4(\xi_n+i)^3}
{\rm trace}[c(\xi')c(df)].
\end{align}
By $i<n,~\int_{|\xi'|=1}\xi_{i_{1}}\xi_{i_{2}}\cdots\xi_{i_{2d+1}}\sigma(\xi')=0$ and ${\rm trace}~[c(\xi')c(df)]$ has no contribution for computing {\bf case~(c)}, we have
\begin{align}\label{74}
&-i\int_{|\xi'|=1}\int^{+\infty}_{-\infty}{\rm tr}[\pi^+_{\xi_n}\sigma_{-1}(\widetilde{D})^{-1}\times
\partial_{\xi_n}\bigg[\frac{4}{f^{2}}
\cdot\frac{c(\xi)c(df)c(\xi)}
{|\xi|^4}\bigg]](x_0)d\xi_n\sigma(\xi')dx'\nonumber\\
&=-i\int_{|\xi'|=1}\int^{+\infty}_{-\infty}\frac{4}{f^{2}}
\cdot\frac{2}{f}\times\frac{1-3\xi_n^2+3i\xi_n-i\xi_n^3}{(\xi_n-i)^4(\xi_n+i)^3}{\rm trace}[c(dx_n)c(df)]d\xi_n\sigma(\xi')dx'\nonumber\\
&=-\frac{4}{f^{2}}
\cdot\frac{2}{f}\times i\Omega_3{\rm trace}[c(dx_n)c(df)]\int_{\Gamma^+}\frac{1-3\xi_n^2+3i\xi_n-i\xi_n^3}{(\xi_n-i)^4(\xi_n+i)^3}d\xi_ndx'\nonumber\\
&=-\frac{4}{f^{2}}
\cdot\frac{2}{f}\times i\Omega_3{\rm trace}[c(dx_n)c(df)]\frac{2\pi i}{3!}\bigg[\frac{1-3\xi_n^2+3i\xi_n-i\xi_n^3}{(\xi_n+i)^3}\bigg]^{(3)}\bigg|_{\xi_n=i}dx'\nonumber\\
&=-\frac{4}{f^{2}}
\cdot\frac{\pi}{2f}{\rm trace}[c(dx_n)c(df)]\Omega_3dx'\nonumber\\
&=-\frac{2\pi}{f^{3}}
{\rm trace}[c(dx_n)c(df)]\Omega_3dx'.
\end{align}
On the other hand, we get
\begin{align}\label{67}
&\partial_{\xi_n}\bigg(\frac{c(\xi)\sum_jc(dx_j)\cdot2\partial_{x_j}(f^{-1})c(\xi)}
{|\xi|^4}\bigg)(x_0)\nonumber\\
=&-\frac{4\xi_n}{(1+\xi_n^2)^3}c(\xi')\sum_jc(dx_j)\cdot2\partial_{x_j}(f^{-1})c(\xi')
+\bigg(\frac{1}{(1+\xi_n^2)^2}-\frac{4\xi_n^2}{(1+\xi_n^2)^3}\bigg)\nonumber\\
&\times\bigg(c(\xi')
\sum_jc(dx_j)\cdot2\partial_{x_j}(f^{-1})c(dx_n)+c(dx_n)
\sum_jc(dx_j)\cdot2\partial_{x_j}(f^{-1})c(\xi')\bigg)\nonumber\\
&
+\bigg(\frac{2\xi_n}{(1+\xi_n^2)^2}
-\frac{4\xi_n^3}{(1+\xi_n^2)^3}\bigg)c(dx_n)\sum_jc(dx_j)\cdot2\partial_{x_j}(f^{-1})c(dx_n).
\end{align}
By (\ref{62}), we have
\begin{align}\label{73}
&{\rm tr}[\pi^+_{\xi_n}\sigma_{-1}(\widetilde{D})^{-1}\times
\partial_{\xi_n}\bigg(\frac{c(\xi)\sum_jc(dx_j)\cdot2\partial_{x_j}(f^{-1})c(\xi)}
{|\xi|^4}\bigg)](x_0)\nonumber\\
=&\frac{2}{f}\cdot\frac{1-3\xi_n^2+3i\xi_n-i\xi_n^3}{(\xi_n-i)^4(\xi_n+i)^3}
{\rm trace}[c(dx_n)\sum_jc(dx_j)\cdot2\partial_{x_j}(f^{-1})]\nonumber\\
&+\frac{2}{f}\cdot\frac{i(1-3\xi_n^2)-3\xi_n+\xi_n^3}{(\xi_n-i)^4(\xi_n+i)^3}
{\rm trace}[c(\xi')\sum_jc(dx_j)\cdot2\partial_{x_j}(f^{-1})].
\end{align}
Then, we have
\begin{align}\label{74}
&-i\int_{|\xi'|=1}\int^{+\infty}_{-\infty}{\rm tr}[\pi^+_{\xi_n}\sigma_{-1}(\widetilde{D})^{-1}\times
\partial_{\xi_n}\bigg[\frac{c(\xi)\sum_jc(dx_j)\cdot2\partial_{x_j}(f^{-1})c(\xi)}
{|\xi|^4}\bigg]](x_0)d\xi_n\sigma(\xi')dx'\nonumber\\
=&-\frac{\pi}{2f}{\rm trace}[c(dx_n)\sum_jc(dx_j)\cdot2\partial_{x_j}(f^{-1})]\Omega_3dx'+\frac{\pi i}{2f}{\rm trace}[c(\xi')\sum_jc(dx_j)\cdot2\partial_{x_j}(f^{-1})]\Omega_3dx'.
\end{align}
Then, we have
\begin{align}\label{94}
{\bf case (c)}=&-\frac{9}{2f^{2}}\pi h'(0)\Omega_3dx'-\frac{2\pi}{f^{3}}
{\rm trace}[c(dx_n)c(df)]\Omega_3dx'-\frac{\pi}{2f}{\rm trace}[c(dx_n)\sum_jc(dx_j)\cdot2\partial_{x_j}(f^{-1})]\Omega_3dx'\nonumber\\
&+\frac{\pi i}{2f}{\rm trace}[c(\xi')\sum_jc(dx_j)\cdot2\partial_{x_j}(f^{-1})]\Omega_3dx'.
\end{align}
Now $\Phi$ is the sum of the {\bf case (a) (I)}-{\bf case (c)}. Therefore, we get
\begin{align}\label{76}
\Phi=\frac{9i-6}{2} f^{-1}\partial_{x_{n}}(f^{-1})\Omega_3dx'.
\end{align}
Hence we conclude that
\begin{thm}\label{999}
Let $M$ be a 4-dimensional compact spin manifold with boundary $\partial M$ and the metric $g^M$ (see (1.3) in \cite{Wa3}). Let $f$ be a nonzero smooth function on $M$ and $\widetilde{D}=\frac{1}{2}fD+c(df)$ be perturbations of Dirac operators, then
 \begin{align}
\widetilde{{\rm Wres}}[\pi^{+}\widetilde{D}^{-1} \circ\pi^{+}\widetilde{D}^{-1}]
=&
\int_{M}\frac{-512\pi^{2}}{f^{2}}\bigg\{\frac{1}{12}s+\frac{\Delta f}{2f}
+\frac{1}{2}g(df,df^{-1})+\frac{2|df|^{2}}{f^{2}}\bigg\}d{\rm vol_{M} }\nonumber\\
&+\int_{\partial M}\Bigg\{\frac{9i-6}{2} f^{-1}\partial_{x_{n}}(f^{-1})\Omega_3dx'\Bigg\}d{\rm vol_{\partial M}},
\end{align}
where  $s$ is the scalar curvature.
\end{thm}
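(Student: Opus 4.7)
The plan is to split the computation into an interior contribution and a boundary contribution, using the decomposition
\[
\widetilde{{\rm Wres}}[\pi^{+}\widetilde{D}^{-1}\circ\pi^{+}\widetilde{D}^{-1}]
=\int_{M}\int_{|\xi|=1}{\rm trace}_{S(TM)}[\sigma_{-4}(\widetilde{D}^{-2})]\sigma(\xi)dx+\int_{\partial M}\Phi
\]
obtained from (\ref{b1})--(\ref{b2}). The interior piece is exactly the Kastler--Kalau--Walze type expression already identified in Theorem \ref{th:312} via the Lichnerowicz-type formula for $\overline{D}^{2}$ from Theorem \ref{th:311} and the heat kernel coefficient $\Phi_{2}(\overline{D}^{2})$, so no new work is required there; it contributes the bulk integrand $\frac{-512\pi^{2}}{f^{2}}\{\frac{1}{12}s+\frac{\Delta f}{2f}+\frac{1}{2}g(df,df^{-1})+\frac{2|df|^{2}}{f^{2}}\}$. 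The core of the proof is therefore the evaluation of $\Phi$ near a boundary point $x_{0}\in\partial M$ in the boundary normal coordinates described around (\ref{b9})--(\ref{b10}).

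Next I would use the symbolic identities of Lemma \ref{le:4.1} to express $\sigma_{-1}(\widetilde{D}^{-1})$ and $\sigma_{-2}(\widetilde{D}^{-1})$ in terms of the corresponding symbols of $D^{-1}$, but with two new kinds of contributions beyond the usual Dirac computation: a conformal rescaling by $f$ that produces $\partial_{x_{j}}(f^{-1})$ terms, and a zeroth-order Clifford piece coming from $\sigma_{0}(\widetilde{D})=\tfrac{1}{2}\sigma_{0}(D)f+c(df)$. Because the constraint $r+\ell-k-j-|\alpha|-1=-4$ with $r,\ell\leq -1$ admits only the five cases (a)(I), (a)(II), (a)(III), (b), (c), the plan is to go through them one by one, in each case:
(i) expand $\sigma_{r}(\widetilde{D}^{-1})$ and $\sigma_{\ell}(\widetilde{D}^{-1})$ via Lemma \ref{le:4.1};
(ii) compute the projections $\pi^{+}_{\xi_{n}}$ using the residue formula (2.3);
(iii) reduce the Clifford traces at $x_{0}$ via the standard relations (\ref{32});
(iv) perform the $\xi_{n}$-integration on $\Gamma^{+}$ by Cauchy's residue theorem and integrate over $|\xi'|=1$, killing any odd monomials in $\xi_{i}$ for $i<n$ as in the remark $\int_{|\xi'|=1}\xi_{i}\sigma(\xi')=0$.

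The main obstacle I expect is the bookkeeping for Cases (b) and (c): the extra $c(df)$-factors coming from $\sigma_{0}(\widetilde{D})$ and the $\partial_{x_{j}}(f^{-1})$-terms in $\sigma_{-2}(\widetilde{D}^{-1})$ produce several pieces, and one must verify the cancellation between the new Clifford traces $\mathrm{trace}[c(dx_{n})c(df)]$ and $\mathrm{trace}[c(\xi')\sum_{j}c(dx_{j})\partial_{x_{j}}(f^{-1})]$ arising symmetrically in $\pi^{+}\widetilde{D}^{-1}\circ\pi^{+}\widetilde{D}^{-1}$. Writing $\sigma_{-2}(\widetilde{D}^{-1})=\tfrac{2}{f}\sigma_{-2}(D^{-1})+\tfrac{4}{f^{2}}\tfrac{c(\xi)c(df)c(\xi)}{|\xi|^{4}}+\tfrac{c(\xi)\sum_{j}c(dx_{j})\cdot 2\partial_{x_{j}}(f^{-1})c(\xi)}{|\xi|^{4}}$ as in Lemma \ref{le:4.1} lets me isolate a ``Dirac part'' (which by the Wang-type identity in \cite{Wa3} contributes the standard $\tfrac{9}{8}\pi h'(0)\Omega_{3}$ factor, scaled by $\tfrac{4}{f^{2}}$) and a ``perturbation part'' carrying all $df$-dependence.

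Finally I would add the results of the five cases. Cases (a)(I), (a)(II), (a)(III) together yield the term proportional to $h'(0)$ plus the term $\tfrac{9i-6}{2}f^{-1}\partial_{x_{n}}(f^{-1})\Omega_{3}$, while the $h'(0)$ contribution of (a)(II) cancels against that of (a)(III); the $\frac{4}{f^{2}}\cdot\frac{9}{8}\pi h'(0)\Omega_{3}$-terms from (b) and (c) cancel each other, and the $\mathrm{trace}[c(dx_{n})c(df)]$ and $\mathrm{trace}[c(\xi')\sum_{j}c(dx_{j})\partial_{x_{j}}(f^{-1})]$ contributions from (b) and (c) cancel pairwise, leaving only $\Phi=\tfrac{9i-6}{2}f^{-1}\partial_{x_{n}}(f^{-1})\Omega_{3}dx'$. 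Combining this with the interior contribution from Theorem \ref{th:312} gives the stated formula, and integration over $\partial M$ with the induced volume form $d\mathrm{vol}_{\partial M}$ finishes the proof.
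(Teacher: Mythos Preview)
Your proposal is correct and follows essentially the same route as the paper: the interior term is read off from Theorem~\ref{th:312}, and the boundary term $\Phi$ is computed by splitting into the five cases (a)(I)--(c), using the symbol identities of Lemma~\ref{le:4.1}, the $\pi^{+}_{\xi_{n}}$-projections, the Clifford trace relations~(\ref{32}), and residue integration in $\xi_{n}$. The cancellation pattern you describe---$h'(0)$-terms of (a)(II) against (a)(III), the $\tfrac{9}{2f^{2}}\pi h'(0)\Omega_{3}$ pieces of (b) against (c), and the $c(df)$- and $\partial_{x_{j}}(f^{-1})$-trace terms of (b) against (c)---is exactly what the paper verifies, with the sole surviving contribution $\tfrac{9i-6}{2}f^{-1}\partial_{x_{n}}(f^{-1})\Omega_{3}dx'$ arising from the second integral in case (a)(III).
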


\section*{Acknowledgements}
This work was supported by NSFC No.12301063 and NSFC No.11771070 and Basic research Project of the Education Department of Liaoning Province (Grant No. LJKQZ20222442). The authors thank the referee for his (or her) careful reading and helpful comments.

\section*{References}

\end{document}